\theoremstyle{definition}
\newtheorem{rmk}{Remark}
\theoremstyle{plain}
\newtheorem{thm}[rmk]{Theorem}
\newtheorem{mthm}[rmk]{Main Theorem}
\newtheorem{lem}[rmk]{Lemma}
\newtheorem{prop}[rmk]{Proposition}
\newtheorem{rem}[rmk]{Remark}
\DeclareMathOperator{\divi}{div}
\DeclareMathOperator{\vol}{vol}
\DeclareMathOperator{\Sp}{Sp}
\DeclareMathOperator{\Sym}{Sym}
\DeclareMathOperator{\ima}{im}
\DeclareMathOperator{\Ima}{Im}
\DeclareMathOperator{\Mat}{Mat}
\DeclareMathOperator{\GL}{GL}
\newcommand{\Ab}{\mathcal A}
\newcommand{\dd}{\mathrm d}
\newcommand{\ddc}{\mathrm{d}^c}
\newcommand{\eps}{\varepsilon}
\newcommand{\MH}{\mathbbm{H}}
\newcommand{\MZ}{\mathbbm{Z}}
\newcommand{\MN}{\mathbbm{N}}
\newcommand{\MC}{\mathbbm{C}}
\newcommand{\MRE}{\mathbbm{R}}
\newcommand{\dell}{\partial}
\DeclareMathOperator{\Pet}{Pet}
\DeclareMathOperator{\Spec}{Spec}
\newtheorem*{set*}{\TagSymbol}
\providecommand{\TagSymbol}{}
\title{The arithmetic volume of the moduli space of abelian surfaces}
\author{Barbara Jung} 
\address{\rm Institut f\"ur Mathematik, Humboldt-Universit\"at zu Berlin, Unter den Linden 6, 10099 Berlin, Germany}
\email{beijung11@gmail.com}
\author{Anna-Maria von Pippich}
\address{\rm Fachbereich Mathematik und Statistik, Universit\"at Konstanz, Universit\"atsstra{\upshape{\ss}}e 10, 78464 Konstanz, Germany}
\email{anna.pippich@uni-konstanz.de}
\begin{document}
\setcounter{tocdepth}{1}
\setcounter{section}{0}
\maketitle
\begin{abstract}
Let $\Ab_g$ denote the moduli stack of principally polarized abelian varieties of dimension $g$.
The arithmetic height, or arithmetic volume, of $\overline{\Ab}_g$, is defined to be the arithmetic 
degree of the metrized Hodge bundle $\overline{\omega}_g$ on $\overline{\Ab}_g$.
In 1999, K\"uhn proved a formula for the arithmetic volume of $\overline{\Ab}_1$
in terms of special values of the Riemann zeta function. In this article, we generalize his result to the case $g=2$.
 \end{abstract}

\section{Introduction}

\subsection{Arithmetic volumes}
Let $\Ab_g$ denote the moduli stack of principally polarized abelian varieties of dimension $g$.
In 1943, Siegel \cite{siegel} inductively computed the geometric volume, i.e., the degree of the associated Hodge bundle $\omega_g$, as
$$
\vol(\Ab_g):=\deg(\omega_g)=(g-1)!\pi^{-g}\zeta(2g)\vol(\Ab_{g-1}),
$$
where $\zeta(\cdot)$ denotes the Riemann $\zeta$-function. \\
 A model for the moduli stack $\Ab_g$ over $\Spec(\MZ)$ was constructed by Mumford et al.~in \cite{mumford3}. Later, Faltings and Chai \cite{faltings} were able to construct a model for toroidal compactifications $\overline{\Ab}_g$ over $\Spec(\MZ)$.
An arithmetic intersection theory, i.e., an intersection theory for varieties over $\Spec(\MZ)$, or, more generally, over the spectra of arithmetic rings $A$, was first approached by Arakelov \cite{arakelov} (and later extended by Deligne \cite{deligne}) in the case of relative dimension 1. By ``compactifying'' the variety over $A$ by a complex fibre induced by the complex embeddings of $A$, the authors were able to define a good notion of intersection, considering the additional datum of Green functions induced by holomorphic vector bundles with smooth Hermitian metrics  on the complex fibre. Soon after, this approach was generalized by Gillet and Soul\'e for arbitrary relative dimension.\\
However, this theory is limited to vector bundles with smooth metrics and it can thus not be applied to the case of the metrized Hodge bundle $\overline{\omega}_g$ on $\overline{\Ab}_g$, as the $L^2$-metric acquires logarithmic singularities on the toroidal boundary. This fact was one of the motivations for Burgos, Kramer, and K\"uhn \cite{bkk} to come up with a more flexible concept of arithmetic Chow groups that can be applied to the situation of metrics having mild singularities. 
 Laying the foundation for this generalized intersection theory,
in 1999, K\"uhn \cite{kuehn}   computed the arithmetic height, or arithmetic volume, of $\overline{\Ab}_1$, which is by definition the arithmetic degree of the metrized Hodge bundle $\overline{\omega}_1$ on $\overline{\Ab}_1$, to equal
$$
\widehat{\vol}(\overline{\Ab}_1)=\widehat{\deg}(\overline{\omega}_1)=\zeta(-1)\left(\frac{\zeta'(-1)}{\zeta(-1)}+\frac{1}{2}\right).
$$
Further results by Bruinier, Burgos and K\"uhn \cite{borcherds} include the case of Hilbert modular surfaces.
A result connecting integrals over Borcherds forms with special values of Eisenstein series by Kudla \cite{kudla} is a main ingredient for the result of this article. Moreover, an inductive approach for Shimura varieties of orthogonal type was given by H\"ormann in \cite{hoermann}, which is also applicable to the problem tackled here. More general conjectures about arithmetic intersection numbers by Kramer, Maillot--Roessler \cite{maillotroessler}, and by the so-called Kudla-Program are stating connections between Fourier coefficients of certain modular forms and classes of algebraic cycles in the arithmetic Chow groups. In particular, the arithmetic degree of the bundle of modular forms is conjectured to be a rational linear combination of logarithmic derivatives of the $\zeta$-function evaluated at negative odd integers.

\subsection{Main result}
In this article, we consider the case $g=2$ and we obtain a formula for the arithmetic height, or arithmetic volume, of $\overline{\Ab}_2$, by computing the degree of the metrized Hodge bundle $\overline{\omega}_2$ on $\overline{\Ab}_2$ as  (see Theorem \ref{thm_main})
$$
\widehat{\vol}(\overline{\Ab}_2)=\widehat{\deg}(\overline{\omega}_2)=\zeta(-3)\zeta(-1)\left(2\frac{\zeta'(-3)}{\zeta(-3)}+2\frac{\zeta'(-1)}{\zeta(-1)}+\frac{17}{6}\right)+ c_2 \log 2+c_3 \log 3, 
$$
with constants $c_2, c_3\in\mathbb{Q}$ arising from the intersection number at the finite places $2$ and $3$.
We are taking an explicit approach, identifying the Hodge bundle with the bundle of modular forms, choosing certain well-investigated sections of this bundle and tracing the value of the arithmetic volume back to the results mentioned above by giving a recursive formula of integrals over cycles.

\subsection{Outline of the article}
The paper is organized as follows. In Section 2, we begin by collecting background information. In particular, 
we state an identity that allows us to decompose integrals over $*$-products of Green currents into computable parts (see Proposition \ref{prop_integrals}).
In Section 3, we apply Proposition \ref{prop_integrals} inductively to obtain an explicit expression
for the contribution of the arithmetic volume in question arising from the complex fibre (see Proposition \ref{prop_5integrals}). 
In Section 4, we treat the term $(B)$ arising in this expression, in particular the integrals over the positive codimensional cycles of $\overline{\mathcal{A}}_2$.
In Section 5, we treat the term $(A)$ arising in the above mentioned expression. 
In Section 6, we prove the main result of the paper (see Theorem \ref{thm_main}).

\subsection{Acknowledgements} 
We would like to express our thanks to J. Kramer for proposing the topic  and for his valuable advice in the course of the write-up of this article. Furthermore, we would like to thank U. K\"uhn for helpful discussions.
Both authors acknowledge support from the DFG Graduate School Berlin Mathematical School and the DFG Research Training Group \emph{Arithmetic and Geometry}, von Pippich acknowledges support from the LOEWE research unit \emph{Uniformized structures in Arithmetic and Geometry} of Technical University Darmstadt and Goethe-University Frankfurt.

\section{Background material}\label{sect:2}
In this section, we will give some background theory. First, we will introduce the moduli space $\mathcal{A}_2$ and its toroidal compactification. Then, we will give formulas for some particular Siegel modular forms living on it. Finally, we will define the notion of its arithmetic volume and state a result that will be used for its explicit computation. 
\subsection{The moduli space $\mathcal{A}_2$}\label{sect: 2.1}

The \textit{Siegel upper half-space} $\mathbbm{H}_2$ is the set
$$
\mathbbm{H}_2:=\left\{ \tau=x+iy\in \Sym_2(\mathbbm{C})\,\left|\right.\, x,y\in \Sym_2(\MRE),\, y>0\right\},
$$
where the notation $y>0$ denotes that the matrix $y=\ima(\tau)$ is positive definite. As an open submanifold of $\Sym_2(\mathbbm{C})$, it has dimension $3$. We denote the coordinates on $\MH_2$ by
\[ \tau: = \begin{pmatrix}\tau_1 & \tau_{12}\\ \tau_{12} &\tau_2\end{pmatrix}=x+iy=\begin{pmatrix}x_1 & x_{12}\\ x_{12} &x_2\end{pmatrix}+i\begin{pmatrix}y_1 & y_{12}\\ y_{12} &y_2\end{pmatrix}.
\]
Setting
$$J=\begin{pmatrix}0_2&\mathbbm{1}_2\\-\mathbbm{1}_2&0_2\end{pmatrix},
$$
the \textit{symplectic group} $\Sp_{4}(\MZ)$ is the subgroup of $\GL_{4}(\MZ)$ defined via
$$
\Sp_{4}(\MZ):=\left\{ M\in \Mat_{4}(\MZ)\,\left|\right.\, M^t J M=J \right\}.
$$
From now on, we will shortly write $\Gamma_2:=\Sp_{4}(\MZ)$.
The action of the symplectic group on the Siegel upper half-space $\MH_2$ is given by the prescription
$$
M\tau:=(A\tau+B)(C\tau+D)^{-1} \quad \left(M=\begin{pmatrix}A & B \\ C & D\end{pmatrix}\in \Gamma_2,\, \tau\in\MH_2\right).
$$
The quotient $\mathcal{A}_2:=\Gamma_2\backslash \mathbbm{H}_2$ is the moduli space of principally polarized abelian surfaces. We denote the quotient morphism by $\pi_2\colon \MH_2\longrightarrow \Ab_2$.

A particular divisor on $\mathcal{A}_2$ appearing later is the Humbert surface given as
\begin{equation}\label{eq_Humbert}
\mathcal H:=\pi_2\Biggl( \biggl\{\begin{pmatrix}\tau_1 & 0 \\ 0 & \tau_2 \end{pmatrix}\,\biggl\vert\biggr.\, \tau_1, \tau_2 \in \MH_1\biggr\}\Biggr)\cong\mathrm{Sym}_2(\mathcal{A}_1) =\mathcal{A}_1\times \mathcal{A}_1\bigl/\bigr.\bigl((\tau_1, \tau_2)\sim (\tau_2, \tau_1)\bigr),
\end{equation}
 Due to the isomorphism $\mathcal H\cong\mathrm{Sym}_2(\mathcal{A}_1)$, we will denote a subset of $\mathcal{H}$ that is a product of two subsets $S_1$, $S_2$ of $\Ab_1$ by
 \begin{equation}\label{eq_Hsubset}
 S_1\times_\mathcal{H} S_2:=\pi_2\Biggl( \biggl\{\begin{pmatrix}\tau_1 & 0 \\ 0 & \tau_2 \end{pmatrix}\,\biggl\vert\biggr.\, \tau_1\in S_1, \tau_2 \in S_2\biggr\}\Biggr).
\end{equation}
A smooth compactification $\overline{\mathcal{A}}_2$ of (a cover of) $\mathcal{A}_2$ such that the boundary is a normal crossing divisor can be achieved by a suitable toroidal compactification.
We recall that the boundary divisor of a toroidal compactification of $\mathcal{A}_2$ is isomorphic to a compactification of the universal family over $\Ab_{1}$, and, therefore, the boundary is of codimension 1, i.e., we have
\begin{align}
\label{formula_boundary_A2}\partial \mathcal{A}_2=\overline{\mathcal{A}}_2\setminus \mathcal{A}_2\cong (\MZ^{2}\rtimes \Gamma_{1})\backslash (\mathbbm{C}^2\times  \mathbbm{H}_{1})^{\overline{\phantom{a}}}. 
\end{align}
Local coordinates in a neighbourhood of $\partial \mathcal{A}_2$ are given by
$$
t:=\exp(2\pi i \tau_1),\, \tau_2=x_2+iy_2,\, \tau_{12}=x_{12}+iy_{12}.
$$

\subsection{Siegel modular forms}\label{sect:2.2}
A holomorphic function $f\colon \MH_2\longrightarrow \MC$ is called \textit{Siegel modular form of degree 2 and weight $k\in\MN$ for $\Gamma_2$} if the following conditions are satisfied:
\begin{itemize}
\item[(i)] $f(M\tau)=\det(C\tau+D)^k f(\tau)$\quad $\left(\tau\in\MH_2, \,M=\left(\begin{smallmatrix}A&B\\C&D\end{smallmatrix}\right)\in\Gamma_2\right)$,
\item[(ii)] $f$ is bounded on all subsets of the form $\left\{\tau=x+iy\in\MH_2\,\left|\right.\, y-y_0>0\right\}\subseteq \MH_2$ with $y_0\in\Sym_2(\MRE)$ positive definite.
\end{itemize}

Condition (ii) implies that $f$ is holomorphic at the boundary of $\MH_2$. Here, it actually follows automatically from the holomorphicity of $f$ and condition (i) by the Koecher principle. Siegel modular forms of degree 2 and weight $k$ obviously form a vector space, which we will denote by $M_k(\Gamma_2)$. Due to their transformation properties with respect to $\Gamma_2$, one can see that they are global sections of  a line bundle $\mathcal{M}_k(\Gamma_2)$ on $\Ab_2$. With the above observation, one obtains the identification
$$
H^0\left(\Ab_2, \omega_2^{\otimes k}\right)\cong M_k(\Gamma_2),
$$ 
where $\omega_2$ denotes the Hodge bundle on $\Ab_2$. The $L^2$-metric on the Hodge bundle induces the so-called Petersson metric on the line bundle $\mathcal{M}_k(\Gamma_2)$, defined by
\begin{align*}
\Vert f(\tau)\Vert^2_{\Pet}:=\vert f(\tau)\vert^2\bigl((4\pi)^2\det(\Ima (\tau))\bigr)^k,
\end{align*}
for $ f$ a global section of $\mathcal{M}_k(\Gamma_2)$.\\
The Siegel modular forms in question will now be constructed by means of $\vartheta$-series. The $\vartheta$-series of a vector $(a,b)\in (\MZ/2\MZ)^2\times (\MZ/2\MZ)^2$ is given by the equality
\begin{equation}\label{eq_thetaseries}\vartheta_{a,b}(\tau):=\sum_{n\in \MZ^2} e^{2\pi i\Bigl(\frac{1}{2}\bigl(n+\frac{a}{2}\bigr)^t\tau\bigl(n+\frac{a}{2}\bigr)+\bigl(n+\frac{a}{2}\bigr)^t\frac{b}{2}\Bigr)}.
\end{equation}
In the coordinates of $\MH_2$, and with $n=(n_1, n_2)^t$, $a=(a_1, a_2)^t$, $b=(b_1, b_2)^t$, this can be rewritten as
\begin{align}
\vartheta_{a,b}(\tau)=
\sum_{n_1,n_2\in \MZ} e^{\pi i\Bigl(\bigl(n_1+\frac{a_1}{2}\bigr)^2\tau_1+2\bigl(n_1+\frac{a_1}{2}\bigr)\bigl(n_2+\frac{a_2}{2}\bigr)\tau_{12}+\bigl(n_2+\frac{a_2}{2}\bigr)^2\tau_2+\bigl(n_1+\frac{a_1}{2}\bigr)b_1+\bigl(n_2+\frac{a_2}{2}\bigr)b_2\Bigr)}.
\end{align}
We note that $\vartheta_{a,b}$ is non-trivial if and only if $a^t b$ is even; we then call $\vartheta_{a,b}$ \textit{even}. There are 10 even $\vartheta$-series of degree 2.

The first Siegel modular form we will use is the cusp form $\chi_{10}$ of weight $10$. It is given (up to normalization) by the product of the squares of the 10 even $\vartheta$-series
\begin{align*}
\chi_{10}(\tau)&:=\frac{1}{2^{12}}\prod_{(a,b)\; \rm{even}}\vartheta^2_{a,b}(\tau)\\
&=e^{2\pi i (\tau_1+\tau_{12}+\tau_2)}\prod_{n,l,m\in \MZ\atop(n,l,m)>0}\Bigl(1-e^{2\pi i(n\tau_1+l\tau_{12}+m\tau_2)}\Bigr)^{2f(nm,l)}.
\end{align*}
Here, $(n,m,l)>0$ means that $m, n\geq 0$, and $l=-1$ for $m=n=0$. The exponents $f(nm,l)$ of the product expansion are given by the Fourier coefficients of a weak Jacobi form, see \cite{gritsenko}, Chapter 4. The notation $(n,m,l)>0$ means that $m, n\geq 0$, and $l=-1$ for $m=n=0$. Note that the normalization differs from the convention in the literature by the factor 4, and rather corresponds to the one of $\chi_{10}$ in \cite{igusaMFZ}. This is done in order to have integer coprime Fourier coefficients, see also \cite{ressal}. It is a well-known result, following from the product expansion, that the divisor of $\chi_{10}$ is given by
\[
\divi (\chi_{10})=\partial \mathcal{A}_2+ \overline{\mathcal{H}}.
\] 
with $\partial \mathcal{A}_2$ given by \eqref{formula_boundary_A2} and $\mathcal{H}$ as in (\ref{eq_Humbert}).

We will further consider the following three modular forms of degree 2 given by
\begin{align*}
&E_4(\tau):=\frac{1}{4}\sum_{(a,b)\; \rm{even}}\vartheta_{a,b}^8(\tau),\\
&E_6(\tau):=\frac{1}{4}\sum\pm\bigl(\vartheta_{a_1, b_1}(\tau) \vartheta_{a_2, b_2}(\tau) \vartheta_{a_3, b_3}(\tau)\bigr)^4,\\[0.2cm]
& \chi_{12}(\tau):=\frac{1}{2^{15}}\sum\bigl(\vartheta_{a_1, b_1}(\tau)\cdot \ldots\cdot\vartheta_{a_6, b_6}(\tau)\bigr)^4.
\end{align*}
 The second sum runs over all syzygous triples $(a_j, b_j)$ $(j=1,2,3)$, and the third sum runs over the complements of the syzygous quadruples as described in \cite{igusa}, Chapter 4. A triple $(m_1, m_2, m_3)$ is called \textit{syzygous} if $m_1+m_2+m_3$ is even. A quadruple $(m_1, m_2, m_3, m_4)$ is called syzygous if any triple $(m_j, m_k, m_l)$ $(1\leq j<k<l\leq 4)$ is syzygous. The signs in the second sum arise from a symmetrization process, making it a modular form for $\Gamma_2$. For details, see \cite{igusaMFZ}. Note that the normalization of $\chi_{12}$ again differs from the convention in the literature by a factor 12, and rather corresponds to the one of $X_{12}$ in \cite{igusaMFZ}, in order to have integer coprime Fourier coefficients, see again also \cite{ressal}.

If $\tau_{12}=0$, i.e., if
$$\tau=\begin{pmatrix}\tau_1&0 \\  0 & \tau_2 \end{pmatrix},$$ 
we see from (\ref{eq_thetaseries}) that the $\vartheta$-series $\vartheta_{a,b}$ with $a=(a_1, a_2)^t$ and $b=(b_1,b_2)^t$ decomposes as
\begin{align}
\vartheta_{a,b}(\tau)&= \vartheta_{a_1, b_1}(\tau_1)  \vartheta_{a_2, b_2}(\tau_2).
\end{align}
Applying this decomposition to the formulas for the Siegel modular forms $E_4$, $E_6$, $\chi_{12}$, their restrictions to $\overline{\mathcal{H}}$ decompose as 
\begin{align}\label{eq_Hreduction}
&E_4(\tau)=E_4(\tau_1)E_4(\tau_2),\quad E_6(\tau)=E_6(\tau_1)E_6(\tau_2), \quad 
\chi_{12}(\tau)=12\Delta(\tau_1)\Delta(\tau_2),
\end{align}
with the degree 1 modular forms
\begin{align*}
&E_4(\tau_1)=\frac{1}{2}\bigl(\vartheta^8_{00}(\tau_1)+\vartheta^8_{01}(\tau_1)+\vartheta^8_{10}(\tau_1)\bigr),\\[0.1cm]
&E_6(\tau_1)=\frac{1}{2}\bigl(\vartheta^4_{00}(\tau_1)+\vartheta^4_{01}(\tau_1)\bigr)\bigl(\vartheta^4_{00}(\tau_1)+\vartheta^4_{10}(\tau_1)\bigr)\bigl(\vartheta^4_{01}(\tau_1)-\vartheta^4_{10}(\tau_1)\bigr),\\[0.1cm]
&\Delta(\tau_1)=\frac{1}{2^8}\bigl(\vartheta_{00}(\tau_1)\vartheta_{01}(\tau_1)\vartheta_{10}(\tau_1)\bigr)^8=\exp(2\pi i \tau_1)\prod_{n \in \MZ}(1-e^{2\pi i n \tau_1})^{24}.
\end{align*}
Note that with this normalization we have
\begin{equation}\label{eq_E4Delta}
E^3_4(\tau_1)-E^2_6(\tau_1)=12^3\Delta(\tau_1).
\end{equation}
Similarly, one sees that for $\tau=\left(\begin{smallmatrix}i\infty & \tau_{12} \\  \tau_{12} & \tau_2 \end{smallmatrix}\right)\in \partial \mathcal{A}_2$, the $\vartheta$-series reduce to
\begin{align*}
\vartheta_{a,b}(\tau)&=  \vartheta_{a_2, b_2}(\tau_2),\;\; \text{if }(a_1, b_1)=(0,b_1), \\
\vartheta_{a,b}(\tau)&=0,\;\;\phantom{aaaaaa.}\text{if }(a_1, b_1)=(1,b_1).
\end{align*}
Hence, on $\partial \mathcal{A}_2$, the Eisenstein series restrict to
\begin{align}\label{eq_Dreduction}
& E_4(\tau)\big\vert_{\partial\Ab_2}=E_4(\tau_2), \quad E_6(\tau)\big\vert_{\partial\Ab_2}=E_6(\tau_2).
\end{align}
Note that, with the above normalizations, the Fourier coefficients around the cusp $\tau_1=i\infty$ of the modular forms $E_4$, $E_6$, $\chi_{10}$, $\chi_{12}$ are all integer and coprime. Therefore, the forms are defined over $\mathrm{Spec}(\MZ)$ by the $q$-expansion principle, see, e.g., \cite{katz}.
\vskip 2mm
\subsection{The arithmetic volume}\label{sect:2.3} 
We will define the arithmetic volume of $\overline{\mathcal{A}}_2$ as the highest self-intersection number of the Hodge bundle on $\overline{\mathcal{A}}_2$ over $\mathrm{Spec}(\MZ)$. In order to do so, we first introduce the notion of a partition of unity adapted to two cycles. \\

We consider a proper smooth real variety $X$, and $Y$, $Z$ be two cycles on $X$ of codimension $p$,  $q$ with support $\vert Y\vert$, $\vert Z\vert$, respectively. We assume that neither $\vert Y\vert$ nor $\vert Z\vert$ are contained in the intersection $\vert Y\vert\cap \vert Z\vert$. We define a partition of unity $\{\sigma_{YZ}, \sigma_{ZY}\}$ as follows: By a result of \cite{burgosgreen} there exists a resolution $\pi:\widetilde X_{\mathbbm R}\rightarrow X_{\mathbbm R}$ of singularities of $\vert Y\vert\cup \vert Z\vert$ which factors through embedded resolutions of $\vert Y\vert$, $\vert Z\vert$, $\vert Y\vert\cap\vert Z\vert$.  Denote by $\widehat Y$ the normal crossing divisor formed by the components of $\pi^{-1}(\vert Y\vert)$ that are not contained in $\pi^{-1}(\vert Y\vert \cap \vert Z\vert)$. Analogously, denote by $\widehat Z$ the normal crossing divisor formed by the components of $\pi^{-1}(\vert Z\vert)$ that are not contained in $\pi^{-1}(\vert Y\vert \cap \vert Z\vert)$. Hence, $\widehat Y$ and $\widehat Z$ are closed subsets of $\widetilde X$ that do not meet. Therefore, there exist two smooth, $F_\infty$-invariant functions $\sigma_{YZ}$ and $\sigma_{ZY}$ satisfying $0\leq\sigma_{YZ}, \sigma_{ZY}\leq 1$, $\sigma_{YZ}+\sigma_{ZY}=1$ with $\sigma_{YZ}=1$ in a neighborhood of $\widehat Y$ and $\sigma_{ZY}=1$ in a neighborhood of $\widehat Z$. \\
\begin{rem} In the following, $\{\sigma_{4,6}, \sigma_{6,4}\}$ will always denote a partition of unity adapted to the cycles $\divi (E_4)$ and $\divi(E_6)$, restricted to $X\subseteq \overline{\Ab}_2$ arising from the context.
\end{rem}

The toroidally compactified moduli stack $\overline{\Ab}_2$ is the complex fibre for an arithmetic variety over $\mathrm{Spec}(\MZ)$, constructed by Faltings and Chai \cite{faltings}. The Hodge bundle on $\overline{\Ab}_2$ is the complex bundle associated to the line bundle of invariant $2$-differentials on the universal semi-abelian scheme also constructed in \cite{faltings}. The arithmetic degree of the Hodge bundle over $\mathrm{Spec}(\MZ)$, i.e., its arithmetic self intersection number, can then be interpreted as the arithmetic volume of $\overline{\Ab}_2$ (as was done with the geometric volume in the introduction). As the Petersson metric has logarithmic singularities at the boundary $\partial \Ab_2$, one has to apply the arithmetic intersection theory established in \cite{bkk} by Burgos, Kramer and K\"uhn, generalizing the intersection theory for smooth metrics by Gillet and Soul\'{e} \cite{gilletsoule}, to compute the self intersection number of the Hodge bundle over $\mathrm{Spec}(\MZ)$. 
By this theory, we can write the arithmetic volume $\widehat{\vol}(\overline{\Ab}_2)$ as a sum 
$$\widehat{\vol}(\overline{\Ab}_2)=\widehat{\vol}(\overline{\Ab}_2)_{\mathrm{fin}}+\widehat{\vol}(\overline{\Ab}_2)_{\infty},$$
with $\widehat{\vol}(\overline{\Ab}_2)_{\mathrm{fin}}$ the self intersection number of the Hodge bundle over $\mathrm{Spec}(\MZ)$ in the sense of Serre, and the contribution of the volume coming from the complex fibre
$$
\widehat{\vol}(\overline{\Ab}_2)_{\infty}=\frac{1}{(2\pi i)}\int\limits_{\overline{\Ab}_2}g_1*g_2*g_3*g_4.
$$
Here, the $g_j=\log\Vert f_j\Vert_{\Pet}$ are Green currents corresponding to the divisors $D_1,\ldots,D_4$ four sections $f_1,\ldots,f_4$ of the Hodge bundle on $\overline{\Ab}_2$ that intersect successively properly. In the case of logarithmically singular metrics, the $*$-product of two Green currents $g_Y$, $g_Z$ corresponding to cycles $Y$, $Z$ is given by
\begin{align}\label{eq_starprod}
g_Y*g_Z
&= 4\pi i(\sigma_{ZY}g_Y)\wedge\dd\ddc g_Z+4\pi i\dd\ddc(\sigma_{YZ}g_Y)\wedge g_Z,
\end{align}
see \cite{bkk}, Section~6.1 for details.\\
Subsequently, we will denote by $g_{4}$, $g_6$, $g_{10}$, $g_{12}$ the Green currents corresponding to $E_4$, $E_6$, $\chi_{10}$, $\chi_{12}$, respectively. Furthermore, we will use the notation $\omega_{k}=4\pi i\dd\dd^c g_k$ $(k=4,6,10,12)$.
The following proposition gives an explicit way to decompose the integral over $*$-products of Green currents into computable parts.

\begin{prop}\label{prop_integrals} Let $X$ be a proper smooth real variety, and let $D$ be a normal crossing divisor on $X$. Assume that $D=D_1\cup D_2$, where $D_1$ and $D_2$ are normal crossing divisors of $X$ satisfying $D_1\cap D_2=\emptyset$.
Let $Y$ and $Z$ be cycles of codimension $p$ and $q$ with corresponding Green objects $\mathfrak{g}_Y=(\omega_Y,\widetilde{g}_Y)$ and $\mathfrak{g}_Z=(\omega_Z,\widetilde{g}_Z)$, respectively.
Assume that $p+q=d+1$, and that $Y$ and $Z$ intersect properly, i.e., $\vert Y\vert\cap \vert Z\vert=\emptyset$. Furthermore, assume $\vert Y\vert\cap D_2=\emptyset$ and $\vert Z\vert\cap D_1=\emptyset$. Then, we have the decomposition
\begin{align*}
&\frac{1}{(2\pi i)^d}\int\limits_{X} g_Y*g_Z\\
&=\lim_{\varepsilon \rightarrow 0}\left( \frac{1}{(2\pi i)^d}\int\limits_{X\backslash B_{\varepsilon}(D)}g_Y\wedge \omega_Z-\frac{2}{(2\pi i)^{d-1}}\int\limits_{\partial B_{\varepsilon}(D_1)}\left[g_Z\wedge \dd^c g_Y-g_Y\wedge \dd^c g_Z\right]\right)\\
&\phantom{aa}+\frac{1}{(2\pi i)^{q-1}}\int\limits_{\overline{Y\backslash Y\cap D_1}}g_Z.
\end{align*}
\end{prop}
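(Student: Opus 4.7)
The strategy is to unravel the definition \eqref{eq_starprod} of the $*$-product, excise $\varepsilon$-tubes around the singular loci $D$ and $|Y|$, apply Stokes' theorem to convert the smooth-locus-exact piece into boundary integrals, and identify the various limits as $\varepsilon \to 0$. The three summands on the right-hand side will arise from three distinct sources: a bulk integral of $g_Y \wedge \omega_Z$, a Stokes boundary along $\partial B_\varepsilon(D_1)$, and a Poincar\'e--Lelong residue along $|Y|$. The asymmetric hypothesis $|Y| \cap D_2 = \emptyset$ (together with its mirror $|Z| \cap D_1 = \emptyset$) enters crucially to kill the a priori contribution from $\partial B_\varepsilon(D_2)$.

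The first step is algebraic: using $\sigma_{YZ} + \sigma_{ZY} = 1$ together with the notation $\omega_Z = 4\pi i\, dd^c g_Z$, one rewrites
\begin{align*}
g_Y * g_Z = g_Y \wedge \omega_Z + 4\pi i\bigl(dd^c(\sigma_{YZ} g_Y) \wedge g_Z - \sigma_{YZ} g_Y \wedge dd^c g_Z\bigr),
\end{align*}
and invokes Green's identity to recognize the bracketed term, on the smooth locus, as the exact form $d\bigl(d^c(\sigma_{YZ} g_Y) \wedge g_Z - \sigma_{YZ} g_Y \wedge d^c g_Z\bigr)$. One then restricts everything to $U_\varepsilon := X \setminus \bigl(B_\varepsilon(D) \cup B_\varepsilon(|Y|)\bigr)$ and applies Stokes' theorem, producing boundary integrals on $\partial B_\varepsilon(D_1)$, $\partial B_\varepsilon(D_2)$, and $\partial B_\varepsilon(|Y|)$, each with inward orientation relative to $U_\varepsilon$.

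The three boundary contributions are then analysed separately. Along $\partial B_\varepsilon(|Y|)$, the identity $\sigma_{YZ} \equiv 1$ in a neighbourhood of $|Y|$ simplifies the integrand to $d^c g_Y \wedge g_Z - g_Y \wedge d^c g_Z$: the first piece yields, via Poincar\'e--Lelong, the residue $\tfrac{1}{(2\pi i)^{q-1}}\int_{\overline{Y \setminus Y \cap D_1}} g_Z$ (the restriction to $\overline{Y \setminus Y \cap D_1}$ reflecting the prior excision of $B_\varepsilon(D_1)$), while the second piece vanishes because $d^c g_Z$ is smooth near $|Y|$ and $g_Y$ is log-integrable there. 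Along $\partial B_\varepsilon(D_2)$, the hypothesis $|Y| \cap D_2 = \emptyset$ ensures that $\sigma_{YZ} g_Y$ and its $d, d^c$ derivatives carry only log-type growth on $D_2$; combined with the shrinking boundary volume, this forces the integral to vanish as $\varepsilon \to 0$. Along $\partial B_\varepsilon(D_1)$, the contribution survives in the limit and produces the stated boundary correction term with integrand $g_Z \wedge d^c g_Y - g_Y \wedge d^c g_Z$ and prefactor $-2/(2\pi i)^{d-1}$; the $\sigma_{YZ}$-dependence drops out via an exactness argument, yielding the symmetric combination. Finally, the bulk term $\int_{U_\varepsilon} g_Y \wedge \omega_Z$ converges to $\int_{X \setminus B_\varepsilon(D)} g_Y \wedge \omega_Z$ as the $|Y|$-tube shrinks, by integrability of the log singularity of $g_Y$ along $|Y|$.

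The main technical obstacle is the careful $\varepsilon$-limit bookkeeping in the presence of multiple overlapping logarithmic singularities: verifying that the $\partial B_\varepsilon(D_2)$ contribution genuinely vanishes (which uses $|Y| \cap D_2 = \emptyset$ through sharp volume estimates for log singularities on normal-crossing divisors), confirming the partition-independence of the surviving $\partial B_\varepsilon(D_1)$ boundary term, and matching the Poincar\'e--Lelong residue on $\partial B_\varepsilon(|Y|)$ so as to produce exactly the stated normalizations $-2/(2\pi i)^{d-1}$ and $(2\pi i)^{-(q-1)}$ on the boundary and cycle terms, respectively.
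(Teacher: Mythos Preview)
The paper does not give a proof of this proposition at all: its entire argument is the single line ``See \cite{borcherds}, Theorem~1.14.'' So there is no in-paper proof to compare your proposal against.

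Your proposal, by contrast, is a sketch of how the cited result is actually established. The outline is essentially the right one: rewrite the $*$-product via $\sigma_{YZ}+\sigma_{ZY}=1$ as a bulk term $g_Y\wedge\omega_Z$ plus an exact form, apply Stokes on the complement of $\varepsilon$-tubes around $D$ and $|Y|$, and sort the boundary contributions using Poincar\'e--Lelong along $\partial B_\varepsilon(|Y|)$, a vanishing argument along $\partial B_\varepsilon(D_2)$ (this is where the disjointness $|Y|\cap D_2=\emptyset$ enters), and a surviving contribution along $\partial B_\varepsilon(D_1)$. This is indeed the architecture of the proof in \cite{borcherds}. The points you flag as technical---the $\varepsilon$-limit bookkeeping, the sharp volume estimates on normal-crossing loci, and the elimination of the $\sigma$-dependence in the $\partial B_\varepsilon(D_1)$ term---are precisely the places where the cited reference does the real work, so your sketch is honest about where the substance lies. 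For the purposes of this paper a citation suffices, but if you wanted to include a proof your plan would serve as a correct blueprint.
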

\begin{proof} See \cite{borcherds}, Theorem~1.14.
\end{proof}
\vskip 2mm

\section{A formula for the complex contribution}\label{sect:3}
In this section, we will apply Proposition \ref{prop_integrals} inductively to obtain an explicit formula for the
\emph{complex contribution} of the arithmetic volume. First, in Lemma \ref{lem_properintersection}, we will check the necessary prerequisites
of  Proposition \ref{prop_integrals} are satisfied by showing that the divisors of the modular forms in question 
intersect successively properly. 
Then, in Proposition \ref{prop_5integrals} we show that the complex contribution decomposes into two terms $(A)$ and $(B)$,
where $(A)$ is the limit term arising from the first induction step, and $(B)$ is essentially the Faltings height of the Humbert surface.

\begin{lem}\label{lem_properintersection}
In the notation of Section \ref{sect:2}, we have the following intersections
\begin{align*}
& \divi (\chi_{10})\cdot \divi (E_6)
=\partial \mathcal{A}_2\cdot \divi (E_6)+\frac{1}{2}\bigl(\{i\}\times_\mathcal{H} \overline{\mathcal{A}}_1\bigr),
\\
& \divi (\chi_{10})\cdot \divi (E_6)\cdot \divi (E_4)=\frac{1}{6}(\{i\}\times_\mathcal{H}\{\omega\}),\\[0.1cm]
& \divi (\chi_{10})\cdot \divi (E_6)\cdot\divi (E_4)\cdot \divi (\chi_{12})=\emptyset.
\end{align*}
Hence, in particular, the divisors corresponding to the modular forms $\chi_{10}$, $E_6$, $E_4$, $\chi_{12}$ 
intersect successively properly. 
\end{lem}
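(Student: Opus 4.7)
The plan is to substitute the divisor decomposition $\divi(\chi_{10})=\partial\mathcal{A}_2+\overline{\mathcal{H}}$ (which follows from the product expansion of $\chi_{10}$ recalled above) and then propagate intersections one divisor at a time, using the restriction formulas \eqref{eq_Hreduction} and \eqref{eq_Dreduction} together with the classical facts that $E_4$ vanishes on $\mathcal{A}_1$ precisely at $\tau=\omega$, $E_6$ vanishes precisely at $\tau=i$, and $\Delta$ is nowhere vanishing on $\mathcal{A}_1$. At each step I also need to argue properness, which in every case reduces to the observation that the zero locus of the next modular form does not contain any irreducible component of the current cycle.

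For the first identity, I split
\[
\divi(\chi_{10})\cdot\divi(E_6)=\partial\mathcal{A}_2\cdot\divi(E_6)+\overline{\mathcal{H}}\cdot\divi(E_6)
\]
and analyze only the second term. Restricting $E_6$ to $\overline{\mathcal{H}}\cong\mathrm{Sym}_2(\overline{\mathcal{A}}_1)$ via \eqref{eq_Hreduction} gives $E_6(\tau_1)E_6(\tau_2)$, whose zero locus is $\{\tau_1=i\}\cup\{\tau_2=i\}$. Under the symmetric square quotient these two components collapse to the single divisor $\{i\}\times_\mathcal{H}\overline{\mathcal{A}}_1$, and the resulting coefficient is $1/2$: the two symmetric branches contribute a factor of $2$, while the stacky weight at the elliptic point $i\in\mathcal{A}_1$ (extra $\mathbb{Z}/2$ automorphism beyond the generic $\{\pm 1\}$) contributes a factor $(1/2)^2=1/4$ from the two factors $E_6(\tau_j)$, giving $2\cdot 1/4=1/2$.

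For the second identity I intersect further with $\divi(E_4)$. The boundary contribution $\partial\mathcal{A}_2\cdot\divi(E_6)\cdot\divi(E_4)$ vanishes: by \eqref{eq_Dreduction}, both restrictions to $\partial\mathcal{A}_2$ depend only on $\tau_2$, and $E_4(\tau_2)$, $E_6(\tau_2)$ vanish at the distinct points $\omega$ and $i$ of $\mathcal{A}_1$. On the Humbert curve $\{i\}\times_\mathcal{H}\overline{\mathcal{A}}_1$, the formula \eqref{eq_Hreduction} combined with $E_4(i)\neq 0$ shows that $E_4$ restricts (up to a nonzero constant) to $E_4(\tau_2)$, whose zero on $\overline{\mathcal{A}}_1$ is $\{\omega\}$ with stacky weight $1/3$. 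Multiplying the coefficients $1/2$ and $1/3$ yields $\frac{1}{6}\bigl(\{i\}\times_\mathcal{H}\{\omega\}\bigr)$.

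For the third identity, restrict $\chi_{12}$ to the zero-cycle $\{i\}\times_\mathcal{H}\{\omega\}$: by \eqref{eq_Hreduction}, $\chi_{12}|_{\overline{\mathcal{H}}}=12\,\Delta(\tau_1)\Delta(\tau_2)$, and since $\Delta$ never vanishes on $\mathcal{A}_1$, this restriction is nonzero at $(i,\omega)$; hence the intersection is empty. Properness at each step is automatic from the explicit descriptions, and putting everything together delivers the successive proper intersection needed to apply Proposition~\ref{prop_integrals}. The principal subtlety in the argument — and the step most deserving of care — is the bookkeeping of stacky multiplicities at $i$ and $\omega$ and the compatibility of these weights with the symmetric-square structure of $\overline{\mathcal{H}}$; everything else reduces to transparent one-variable vanishing statements.
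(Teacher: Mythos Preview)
Your route is the same as the paper's: split $\divi(\chi_{10})=\partial\mathcal{A}_2+\overline{\mathcal{H}}$ and then restrict successively via \eqref{eq_Hreduction} and \eqref{eq_Dreduction}. The paper also records the boundary piece $\partial\mathcal{A}_2\cdot\divi(E_6)$ explicitly as the fibre over $\tau_2=i$, but the argument is otherwise identical, and your treatment of the second and third identities is fine.

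Where your write-up goes wrong is precisely the step you flag as the ``principal subtlety'': the justification of the coefficient $\tfrac12$. You claim the two factors $E_6(\tau_j)$ together contribute $(1/2)^2=1/4$, then multiply by $2$ for the two symmetric branches. But along the branch $\{\tau_1=i\}$ only $E_6(\tau_1)$ vanishes (to stacky order $1/2$); the factor $E_6(\tau_2)$ is a unit there and contributes nothing. So each branch already has multiplicity $\tfrac12$, not $\tfrac14$. The two branches on $\overline{\mathcal{A}}_1\times\overline{\mathcal{A}}_1$ are exchanged by the involution and descend to a \emph{single} divisor on $\mathrm{Sym}_2(\overline{\mathcal{A}}_1)$ with that same multiplicity $\tfrac12$ (equivalently: the pullback of $\{i\}\times_{\mathcal H}\overline{\mathcal{A}}_1$ is the sum of the two branches, so the coefficient is read off directly). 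The paper states it in one line: ``$E_6(\tau_2)$ has a zero of order $1/2$ for $\tau_2=i$''. Your final number $\tfrac12$ is right, but the $2\cdot\tfrac14$ arithmetic is a coincidence, not a derivation; please rewrite that sentence.
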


\begin{proof}
Restricting $E_6$ to $\overline{\mathcal{H}}$ and $\partial \mathcal{A}_2$, one obtains by means of \eqref{eq_Hreduction} and (\ref{eq_Dreduction}) that
\[
\divi (E_6)\cdot \divi (\chi_{10})= \pi_2\Biggl(\biggl\{ \begin{pmatrix}i\infty&\tau_{12}\\ \tau_{12}&i\end{pmatrix}\,\biggl\vert\biggr.\, \tau_{12}\in \MC\biggr\}\Biggr)+\frac{1}{2}\bigl(\{i\}\times_\mathcal{H} \overline{\mathcal{A}_1}\bigr),
\]
as $E_6(\tau_2)$ has a zero of order $1/2$ for $\tau_2=i$. Restricting $E_4$ to $\divi (E_6)\cdot \divi (\chi_{10})$ and noting that $E_4(\tau_2)$ has a zero of order $1/3$ at $\tau_2=\omega$, one obtains by means of (\ref{eq_Hreduction}) and (\ref{eq_Dreduction}) that
\[\divi (E_4)\cdot \divi (E_6)\cdot \divi (\chi_{10})=\frac{1}{6}(\{i\}\times_\mathcal{H}\{\omega\}).
\]
Finally, as $\chi_{12}$ vanishes only at the boundary $\partial \mathcal{A}_2$, one sees that
 \[\divi (\chi_{12})\cdot \divi (E_4)\cdot \divi (E_6)\cdot \divi (\chi_{10})=\emptyset, 
\]
so the divisors intersect properly.
\end{proof}
\begin{prop}\label{prop_5integrals} 
The contribution of the arithmetic volume arising from the complex fibre can be computed as
\begin{align*}
&\widehat\vol(\overline{\mathcal{A}}_2)_{\infty}=\frac{1}{(2\pi i)^3}\int\limits_{\overline{\mathcal{A}}_2} g_{10}*g_{6}*g_{4}*g_{12}=(A)+(B),
\end{align*}
where we have set
\begin{align*}
(A)&:=\lim_{\varepsilon \rightarrow 0}\left( \frac{1}{(2\pi i)^3}\int\limits_{\overline{\mathcal{A}}_2\backslash B_{\varepsilon}(\partial \mathcal{A}_2)}g_{10}\wedge \omega_{6}\wedge\omega_{4}\wedge\omega_{12}\right.\\
&\phantom{:=}\left.-\frac{2}{(2\pi i)^{2}}\int\limits_{\partial B_{\varepsilon}(\partial \mathcal{A}_2)}\left[(g_{6}*g_{4}*g_{12})\wedge \dd^c g_{10}-g_{10}\wedge \dd^c (g_{6}*g_{4}*g_{12})\right]\right)\\[0.2cm]
(B)&:=-\lim_{\varepsilon \rightarrow 0}\left(\frac{2}{2\pi i} \int\limits_{\partial B_\eps(\partial \mathcal{H})}\left[g_4\wedge \dd^c(\sigma_{6,4}g_6)\wedge\omega_{12}-(\sigma_{6,4}g_6)\wedge \dd^c g_4\wedge\omega_{12}\right]\right)\\[0.2cm]
& \phantom{:=}+\frac{1}{(2\pi i)^{2}}\int\limits_{\overline{\mathcal{H}}}g_{6}\wedge\omega_{4}\wedge \omega_{12}+\frac{1}{4\pi i}\int\limits_{\{i\}\times_\mathcal{H} \overline{\Ab}_{1}}g_{4}\wedge\omega_{12}+\frac{1}{6}\int\limits_{\{i\}\times_\mathcal{H}\{\omega\}} g_{12}.
\end{align*}
\end{prop}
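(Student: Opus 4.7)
The approach is to apply Proposition \ref{prop_integrals} three times in succession, each application stripping off one Green current and reducing the dimension of the ambient space by one. The proper intersection statements needed as hypotheses are supplied by Lemma \ref{lem_properintersection} and by the restriction formulas \eqref{eq_Hreduction}, \eqref{eq_Dreduction}.

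For the first step, I would apply Proposition \ref{prop_integrals} on $X=\overline{\mathcal{A}}_2$ to the associativity grouping $g_{10}*(g_6*g_4*g_{12})$, choosing $Y=\divi(\chi_{10})$, $Z=\divi(E_6)\cdot\divi(E_4)\cdot\divi(\chi_{12})$, $D_1=\partial\mathcal{A}_2$, and $D_2=\emptyset$. The hypothesis $|Z|\cap D_1=\emptyset$ holds vacuously, since $|Z|=\emptyset$ by Lemma \ref{lem_properintersection}. As $\divi(\chi_{10})=\partial\mathcal{A}_2+\overline{\mathcal{H}}$ with multiplicity one, we have $\overline{Y\setminus(Y\cap D_1)}=\overline{\mathcal{H}}$, so Proposition \ref{prop_integrals} produces exactly $(A)$ together with the residue $\tfrac{1}{(2\pi i)^2}\int_{\overline{\mathcal{H}}} g_6*g_4*g_{12}$.

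For the second step, I would apply Proposition \ref{prop_integrals} on $X=\overline{\mathcal{H}}$ to the grouping $g_6*(g_4*g_{12})$, with $Y=\divi(E_6)|_{\overline{\mathcal{H}}}$, $Z=\divi(E_4)\cdot\divi(\chi_{12})|_{\overline{\mathcal{H}}}$, and $D_1=\partial\mathcal{H}$. By \eqref{eq_Hreduction}, $E_6$ restricts on $\overline{\mathcal{H}}$ to $E_6(\tau_1)E_6(\tau_2)$, so $|Y|$ lies in the interior of $\overline{\mathcal{H}}$; a short restriction argument analogous to the proof of Lemma \ref{lem_properintersection} gives $|Z|\cap D_1=\emptyset$. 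The ambient integral contributes $\tfrac{1}{(2\pi i)^2}\int_{\overline{\mathcal{H}}} g_6\wedge\omega_4\wedge\omega_{12}$, namely the second term of $(B)$, while the boundary correction furnishes exactly the $\partial B_\eps(\partial\mathcal{H})$ limit term of $(B)$ — this is where the partition of unity $\{\sigma_{4,6},\sigma_{6,4}\}$ naturally enters, through the expansion of $g_6*g_4$ inside the boundary integrand. The residue $\tfrac{1}{2\pi i}\int_{\overline{Y\setminus Y\cap D_1}} g_4*g_{12}$ reduces to $\tfrac{1}{4\pi i}\int_{\{i\}\times_\mathcal{H}\overline{\mathcal{A}}_1} g_4*g_{12}$ after incorporating the symmetrization factor from $\overline{\mathcal{H}}\cong\mathrm{Sym}_2(\overline{\mathcal{A}}_1)$ together with the stacky order $\tfrac{1}{2}$ of $E_6$ at $\tau=i$.

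For the third step, I would apply Proposition \ref{prop_integrals} once more on the one-dimensional cycle $\{i\}\times_\mathcal{H}\overline{\mathcal{A}}_1$ with $Y=\divi(E_4)$ and $Z=\divi(\chi_{12})$ restricted to the curve, and $D_1$ the cusp. The ambient integral then gives $\tfrac{1}{4\pi i}\int_{\{i\}\times_\mathcal{H}\overline{\mathcal{A}}_1} g_4\wedge\omega_{12}$, the third term of $(B)$, and the final residue is the evaluation of $g_{12}$ at $(i,\omega)$ with cumulative multiplicity $\tfrac{1}{2}\cdot\tfrac{1}{3}=\tfrac{1}{6}$, matching $\tfrac{1}{6}\int_{\{i\}\times_\mathcal{H}\{\omega\}} g_{12}$. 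The main obstacle will be the careful book-keeping of the hypotheses of Proposition \ref{prop_integrals} at each stage, in particular ensuring $D_1\cap D_2=\emptyset$, $|Y|\cap D_2=\emptyset$, and $|Z|\cap D_1=\emptyset$ in compatibility with the singular loci of the Petersson metrics at both the toroidal boundary and the divisorial cycles, and then aligning the partition-of-unity terms produced by the intermediate $*$-products with the specific boundary limit appearing in $(B)$. Tracking the stacky multiplicities $\tfrac{1}{2}$ and $\tfrac{1}{3}$ arising from the elliptic points $i,\omega\in\overline{\mathcal{A}}_1$ together with the symmetrization on $\mathrm{Sym}_2(\overline{\mathcal{A}}_1)$ is essential in order to reproduce the precise coefficients in front of the residue integrals in $(B)$.
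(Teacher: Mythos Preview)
Your first step matches the paper exactly. The issue is in your second step, where the grouping $g_6*(g_4*g_{12})$ does not satisfy the hypotheses of Proposition~\ref{prop_integrals}. With $D_1=\partial\mathcal{H}$ and $Z=\divi(E_4)\cdot\divi(\chi_{12})\big\vert_{\overline{\mathcal{H}}}$, note that $\chi_{12}\vert_{\overline{\mathcal{H}}}=12\Delta(\tau_1)\Delta(\tau_2)$ vanishes only on $\partial\mathcal{H}$, and $E_4$ restricted there equals $E_4(\tau_2)$, which has a zero at $\tau_2=\omega$. Hence $\vert Z\vert=\{i\infty\}\times_{\mathcal{H}}\{\omega\}\subset\partial\mathcal{H}=D_1$, so $\vert Z\vert\cap D_1\neq\emptyset$. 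Likewise your claim that $\vert Y\vert=\vert\divi(E_6)\vert_{\overline{\mathcal{H}}}\vert$ lies in the interior is false: since $E_6\vert_{\overline{\mathcal{H}}}=E_6(\tau_1)E_6(\tau_2)$, the divisor $\{i\}\times_{\mathcal{H}}\overline{\mathcal{A}}_1$ meets $\partial\mathcal{H}$ at $\{i\}\times_{\mathcal{H}}\{i\infty\}$. Moreover, even formally, the boundary integrand Proposition~\ref{prop_integrals} would produce from your grouping is $(g_4*g_{12})\wedge\dd^c g_6-g_6\wedge\dd^c(g_4*g_{12})$, not the expression $g_4\wedge\dd^c(\sigma_{6,4}g_6)\wedge\omega_{12}-(\sigma_{6,4}g_6)\wedge\dd^c g_4\wedge\omega_{12}$ that actually appears in $(B)$.

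The paper avoids this by grouping the second step the other way, $(g_6*g_4)*g_{12}$, and taking $D_1=\emptyset$, $D_2=\partial\mathcal{H}$. Then $Y=\divi(E_6)\cdot\divi(E_4)\vert_{\overline{\mathcal{H}}}=\tfrac{1}{6}(\{i\}\times_{\mathcal{H}}\{\omega\})$ is a single interior point, so $\vert Y\vert\cap D_2=\emptyset$, and the $D_1$-boundary correction in Proposition~\ref{prop_integrals} is vacuous. This yields directly the residue $\tfrac{1}{6}\int_{\{i\}\times_{\mathcal{H}}\{\omega\}}g_{12}$ together with $\lim_{\eps\to0}\tfrac{1}{(2\pi i)^2}\int_{\overline{\mathcal{H}}\setminus B_\eps(\partial\mathcal{H})}(g_6*g_4)\wedge\omega_{12}$. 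The remaining three terms of $(B)$ are then obtained not by a third application of Proposition~\ref{prop_integrals}, but by rewriting $g_6*g_4$ via \eqref{eq_starprod} as $g_6\wedge\omega_4$ plus an exact form involving $\sigma_{6,4}$, and applying Stokes' theorem on $\mathcal{H}_\eps$; the two boundary components $\partial B_\eps(\partial\mathcal{H})$ and $\partial B_\eps(\{i\}\times_{\mathcal{H}}\overline{\mathcal{A}}_1)$ then give precisely the limit term and the $\tfrac{1}{4\pi i}\int_{\{i\}\times_{\mathcal{H}}\overline{\mathcal{A}}_1}g_4\wedge\omega_{12}$ term.
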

\begin{proof}
We start by applying Proposition~\ref{prop_integrals} with
$X=\overline{\mathcal{A}}_2$ and $D=D_1\cup D_2$, where $D_1=\partial \mathcal{A}_2$ and $D_2=\emptyset$.
We consider the cycles 
\begin{align*}
Y=\divi (\chi_{10})=\partial \mathcal{A}_2+\overline{\mathcal{H}},\quad 
Z=\divi (E_6)\cdot \divi (E_4)\cdot \divi (\chi_{12})=\emptyset,
\end{align*}
having codimension $1$ resp. $3$, so $d=3$. By Lemma \ref{lem_properintersection},
$Y$ and $Z$ intersect properly. 
 Clearly, $\vert Y\vert\cap D_2=\emptyset$, and $\vert Z\vert\cap D_1=\emptyset$ holds, as $D_1$ is a component of $Y$. Thus, by Proposition~\ref{prop_integrals}, we obtain the equality
\begin{align*}
\frac{1}{(2\pi i)^3}\int\limits_{\overline{\mathcal{A}}_2} g_{10}*(g_{6}*g_{4}*g_{12})
=(A)+
\frac{1}{(2\pi i)^{2}}\int\limits_{\overline{\mathcal{H}}}g_{6}*g_{4}*g_{12},
\end{align*}
observing that $\overline{Y\backslash Y\cap D_1}=\overline{\mathcal{H}}$.
We will later see that the first integral in the decomposition converges for $\eps$ approaching 0 and can therefore be taken out of the limit.
To treat the second term, we will next apply Proposition~\ref{prop_integrals} with
$X=\overline{\mathcal{H}}$ and $D=D_1\cup D_2$, where $D_1=\emptyset$ and $D_2=\partial \mathcal{H}$.
We consider the cycles 
\begin{align*}
&Y=\divi (E_6)\cdot \divi (E_4)\cdot\overline{\mathcal{H}}=\frac{1}{6}(\{i\}\times_\mathcal{H} \{\omega\}),\quad Z= \divi (\chi_{12})\cdot\overline{\mathcal{H}}= \{i\infty\}\times_\mathcal{H} \overline{\Ab}_1,
\end{align*}
having codimension $2$ and $1$, so $d=2$. Hence,
$Y$ and $Z$ intersect properly, 
and 
 clearly, $\vert Y\vert\cap D_2=\emptyset$ and $\vert Z\vert\cap D_1=\emptyset$. By Proposition~\ref{prop_integrals}, we then obtain
\begin{align*}
\frac{1}{(2\pi i)^{2}}\int\limits_{\overline{\mathcal{H}}}(g_{6}*g_{4})*g_{12}
&=\lim_{\varepsilon \rightarrow 0}\Biggl(\frac{1}{(2\pi i)^{2}}\int\limits_{\overline{\mathcal{H}}\setminus B_\eps(\partial \mathcal{H})}(g_{6}*g_4)\wedge \omega_{12}\Biggr)
+\frac{1}{6}\int\limits_{\{i\}\times_\mathcal{H} \{\omega\}}g_{12},
\end{align*}
observing that $\overline{Y\backslash Y\cap D_1}=\overline{Y}$.
To treat the first term, we now 
rewrite $g_6*g_4$, using \eqref{eq_starprod}, and obtain
\begin{align*}
g_6*g_4&=4\pi i\bigl((\sigma_{4,6} g_6)\dd\dd^c g_4+\dd\dd^c(\sigma_{6,4} g_6)\wedge g_4\bigr)\\
&=4\pi i \bigl( \left(g_6\wedge \dd\dd^c g_4-(\sigma_{6,4}g_6)\wedge \dd\dd^c g_4\right)+\left(\dd(\dd^c(\sigma_{6,4}g_6)\wedge g_4)-\dd^c(\sigma_{6,4} g_6)\wedge\dd g_4\right)\bigr)\\
&=4\pi i\bigl(g_6\wedge\dd\dd^c g_4+\dd\left(\dd^c(\sigma_{6,4}g_6)\wedge g_4-(\sigma_{6,4}g_6)\wedge \dd^c g_4\right)\bigr),
\end{align*}
with $\{\sigma_{4,6},\sigma_{6,4}\}$ a partition of unity adapted to $\divi(E_4)$ and $\divi(E_6)$.
 Note that this expression is singular along $\divi(E_6)\cdot \overline{\mathcal{H}}=\frac{1}{2}(\{i\}\times_\mathcal{H} \overline{\Ab}_1)$. We therefore set $\mathcal{H}_\eps:=\overline{\mathcal{H}}\setminus B_\eps(\partial \mathcal{H}\cup \{i\}\times_\mathcal{H} \overline{\Ab}_1)$ and obtain
\begin{align*}
&\frac{1}{(2\pi i)^2}\int\limits_{\mathcal{H}_\eps}(g_{6}*g_{4})\wedge\omega_{12})=\frac{1}{(2\pi i)^2}\int\limits_{\mathcal{H}_\eps}g_6\wedge\omega_{4}\wedge\omega_{12}\\
&\phantom{aaaa}+\frac{4\pi i}{(2\pi i)^2}\int\limits_{\mathcal{H}_\eps}\dd\bigl(g_4\wedge \dd^c(\sigma_{6,4}g_6)\wedge\omega_{12}-(\sigma_{6,4}g_6)\wedge \dd^c g_4\wedge\omega_{12}\bigr).
\end{align*}
Applying Stokes' theorem and taking the limit $\eps\rightarrow 0$, we obtain
\begin{align*}
&\lim_{\eps \rightarrow 0}\frac{1}{(2\pi i)^2}\int\limits_{\mathcal{H}_\eps}(g_{6}*g_{4})\wedge\omega_{12}=\frac{1}{(2\pi i)^2}\int\limits_{\overline{\mathcal{H}}}g_6\wedge\omega_4\wedge\omega_{12}+\frac{1}{4\pi i}\int\limits_{\{i\}\times_\mathcal{H} \overline{\Ab}_1}\hspace{-0.2cm}g_4\wedge\omega_{12}\\
&\phantom{aaaaaa}-\lim_{\eps \rightarrow 0}\frac{2}{2\pi i}\int\limits_{\partial B_\eps(\partial \mathcal{H})}\left[g_4\wedge \dd^c(\sigma_{6,4}g_6)\wedge\omega_{12}-(\sigma_{6,4}g_6)\wedge \dd^c g_4\wedge\omega_{12}\right].
\end{align*}
This completes the proof of the theorem.
\end{proof}

\section{The Faltings height of the Humbert surface}\label{sect:4}

In this section, we treat the term $(B)$ arising in Proposition \ref{prop_5integrals} of Section \ref{sect:3}.
First, using a well-known result by Rohrlich, we are able to explicitly compute the integrals in the line 
of $(B)$, i.e. over the positive codimensional cycles of $\overline{\mathcal{A}}_2$. Then, we show the vanishing 
of the boundary term, by explicit estimates. We start by recalling Rohrlich's result.

\begin{lem}\label{lem_1dimint} Let $f$ be a modular form of weight $k$ with $f(i\infty)=1$. Then, we have the equality
\begin{align*}
\int\limits_{\Ab_1}\log\Vert f(\tau_1)\Vert_{\Pet} \frac{\dd x_1\dd y_1}{4\pi y^2_1}
=-k\left(\frac{1}{2}\zeta(-1)+\zeta'(-1)\right)-\frac{1}{12}\sum_{\tau_0\in\vert\divi(f)\vert}\mathrm{ord}_{\tau_0}(f)\log\Vert\Delta(\tau_0)\Vert_{\Pet}.
\end{align*}
Here, $\tau_1=x_1+iy_1$ is the coordinate on $\Ab_1$, and the term $\Vert\Delta(\tau_0)\Vert_{\Pet}$ denotes the quantity $(4\pi)^{6}\vert\Delta(\tau_0)y^6_0\vert$.
\end{lem}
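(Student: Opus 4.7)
The plan is to split the integrand via the identity $\log\Vert f(\tau_1)\Vert_{\Pet} = \log|f(\tau_1)| + \frac{k}{2}\log(4\pi y_1)$ and evaluate the two contributions separately. The $\log y_1$-piece, $\frac{k}{2}\int_{\Ab_1}\log(4\pi y_1)\,\frac{\dd x_1\dd y_1}{4\pi y_1^2}$, is a classical integral that I would compute via Kronecker's first limit formula: the real-analytic Eisenstein series $E^*(\tau_1,s):=\sum_{\gamma\in\Gamma_\infty\backslash\Gamma_1}(\Ima\gamma\tau_1)^s$ has a Laurent expansion at $s=1$ whose constant term features $\log(y_1|\eta(\tau_1)|^4)$, and the pole/residue structure at $s=1$ (equivalently, Rankin--Selberg unfolding combined with the functional equation of $\zeta$) converts this into a combination of $\zeta(-1)$ and $\zeta'(-1)$, yielding exactly the $-k(\tfrac12\zeta(-1)+\zeta'(-1))$ contribution.

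For the $\log|f|$-piece, I would apply Stokes' theorem on the truncated, punctured fundamental domain
\[
\mathcal{F}_{T,\eps} := (\mathcal{F} \cap \{y_1\le T\}) \setminus \bigsqcup_{\tau_0\in|\divi(f)|} B_\eps(\tau_0),
\]
where $\mathcal{F}$ is the standard fundamental domain for $\Gamma_1$. Since $\log|f|$ is harmonic on $\mathcal{F}_{T,\eps}$, one is forced to pair it against an auxiliary one-form of matching cohomological type; the natural choice is $\ddc\log\Vert\Delta(\tau_1)\Vert_{\Pet}$, because $\Delta$ is nonvanishing on $\MH_1$ and so its Petersson norm serves as a clean reference Green function. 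The Green--Stokes identity
\[
\int_{\mathcal{F}_{T,\eps}} \dd\!\bigl(\log|f|\,\ddc\log\Vert\Delta\Vert_{\Pet} - \log\Vert\Delta\Vert_{\Pet}\,\ddc\log|f|\bigr) = 0
\]
then reduces the bulk computation to boundary contributions, where $\Vert\Delta\Vert_{\Pet}$ already carries the $k$-th Green-function weight in a form calibrated to the Petersson normalization of $f$.

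The boundary $\partial\mathcal{F}_{T,\eps}$ decomposes into three pieces: the pairwise identified vertical sides and unit-arc of $\mathcal{F}$, which cancel by the $\Gamma_1$-equivariance of both $f$ and $\Delta$; the small circles $\partial B_\eps(\tau_0)$, on each of which the local expansion $\log|f(\tau_1)|\sim\mathrm{ord}_{\tau_0}(f)\log|\tau_1-\tau_0|+O(1)$ together with the smoothness of $\log\Vert\Delta\Vert_{\Pet}$ at $\tau_0$ produces by a residue computation the contribution $-\frac{1}{12}\mathrm{ord}_{\tau_0}(f)\log\Vert\Delta(\tau_0)\Vert_{\Pet}$ as $\eps\to 0$; and the horizontal segment at $y_1=T$, which via the asymptotics $\log|\Delta(\tau_1)| = -2\pi y_1 + O(e^{-2\pi y_1})$ and $\log|f(\tau_1)| = O(1)$ (using $f(i\infty)=1$) contributes the remaining $\zeta$-value terms as $T\to\infty$, matching the output of the $\log y_1$-piece analysis.

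The main obstacle I anticipate is the precise book-keeping of numerical constants: the factor $1/(4\pi y_1^2)$ in the measure, the normalization of $\ddc$ used in the paper, and the weight-dependent prefactors in the Petersson metric must all be tracked consistently through Stokes' theorem. A cleaner alternative would be to form the modular function $F:=f^{12}/\Delta^k$ of weight $0$, which descends to a meromorphic function on $\overline{\Ab}_1\cong\mathbbm{P}^1$ with divisor $12\,\divi(f)-k\,[i\infty]$; Green function theory on $\mathbbm{P}^1$ then evaluates $\int_{\Ab_1}\log|F|\,\frac{\dd x_1\dd y_1}{4\pi y_1^2}$ essentially in closed form, and combining this with the known special case $f=\Delta$ (the Chowla--Selberg/Kronecker limit formula, for which the sum over zeros is empty) yields the stated identity with markedly less constant-tracking overhead.
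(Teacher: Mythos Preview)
Your proposal is correct and in substance coincides with what the paper invokes: the paper's proof is simply a citation to Rohrlich's modular Jensen formula \cite{rohrlich} together with K\"uhn's derivation of the stated form in \cite{kuehn}, and what you have sketched---Stokes' theorem on a truncated punctured fundamental domain with $\log\Vert\Delta\Vert_{\Pet}$ as reference Green function, or equivalently passing to the weight-zero function $F=f^{12}/\Delta^k$ and using the Kronecker limit formula for the $\Delta$-integral---is precisely the content of those references. Your ``cleaner alternative'' via $F=f^{12}/\Delta^k$ is in fact the standard route and the one closest to how Rohrlich and K\"uhn organize the computation, so I would recommend leading with it rather than presenting it as a fallback.
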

\begin{proof} The lemma follows from a theorem of Rohrlich, see \cite{rohrlich}. For the computations leading to the stated version of the formula, see the proof of Theorem~1.6.1 in \cite{kuehn}.
\end{proof}

\begin{prop}\label{prop_lowerdim}The second line in $(B)$, see Proposition~\ref{prop_5integrals},  has the value
\begin{align*}
&\frac{1}{(2\pi i)^{2}}\int\limits_{\overline{\mathcal{H}}}g_{6}\wedge\omega_{4}\wedge \omega_{12}+\frac{1}{4\pi i}\int\limits_{\{i\}\times_\mathcal{H} \overline{\Ab}_{1}}g_{4}\wedge\omega_{12}+\frac{1}{6}\int\limits_{\{i\}\times_\mathcal{H}\{\omega\}} g_{12}\\
&=-6\biggl(\frac{1}{2}+\frac{\zeta'(-1)}{\zeta(-1)}\biggr)-\frac{4}{3}\log 2-\frac{2}{3}\log 3.
\end{align*}
\end{prop}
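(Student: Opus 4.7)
The plan is to reduce each of the three integrals to one-dimensional integrals on $\overline{\mathcal{A}}_1$ together with evaluations at the elliptic points, using the product decomposition $\overline{\mathcal{H}} \cong \mathrm{Sym}_2(\overline{\mathcal{A}}_1)$, and then to apply Rohrlich's formula (Lemma~\ref{lem_1dimint}). Thanks to \eqref{eq_Hreduction} and the relation $\det\Ima(\tau)|_{\mathcal{H}} = y_1 y_2$, the Petersson norms split as $\|E_k\|_{\Pet}|_{\overline{\mathcal{H}}} = \|E_k(\tau_1)\|_{\Pet}\|E_k(\tau_2)\|_{\Pet}$ for $k=4,6$ and $\|\chi_{12}\|_{\Pet}|_{\overline{\mathcal{H}}} = 12\,\|\Delta(\tau_1)\|_{\Pet}\|\Delta(\tau_2)\|_{\Pet}$. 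Taking logarithms yields additive decompositions of the Green currents $g_4|_{\overline{\mathcal{H}}}, g_6|_{\overline{\mathcal{H}}}$ and $g_{12}|_{\overline{\mathcal{H}}} = \log 12 + g_{\Delta}^{(1)}(\tau_1) + g_{\Delta}^{(1)}(\tau_2)$; applying $4\pi i\,dd^c$ gives analogous decompositions for $\omega_4, \omega_{12}$.

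For the point integral, the identity \eqref{eq_E4Delta} at the elliptic points (using $E_6(i)=0$, $E_4(\omega)=0$) together with the Petersson-metric factors at $y_0=1$ and $y_0=\sqrt{3}/2$ and the factorization $12^3 = 2^6\cdot 3^3$ gives
\[
 g_\Delta^{(1)}(i) = 3g_4^{(1)}(i) - 3\log 12, \qquad g_\Delta^{(1)}(\omega) = 2g_6^{(1)}(\omega) - 3\log 12,
\]
so that $\tfrac{1}{6}\int_{\{i\}\times_\mathcal{H}\{\omega\}} g_{12} = \tfrac{1}{2}g_4^{(1)}(i) + \tfrac{1}{3}g_6^{(1)}(\omega) - \tfrac{5}{6}\log 12$. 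On the codimension-one cycle $\{i\}\times_\mathcal{H}\overline{\mathcal{A}}_1$ parameterized by the varying $\tau_1$ (with $\tau_2=i$), only the $\tau_1$-part of $\omega_{12}$ survives, and the integrand becomes $(g_4^{(1)}(\tau_1) + g_4^{(1)}(i))\omega_\Delta^{(1)}(\tau_1)$; integration by parts on $\overline{\mathcal{A}}_1$ (trading $g_4^{(1)}dd^c g_\Delta^{(1)}$ against $g_\Delta^{(1)} dd^c g_4^{(1)}$) combined with Rohrlich's formula for $E_4$ expresses it in terms of $\zeta'(-1)/\zeta(-1)$, $g_4^{(1)}(i)$, and $g_\Delta^{(1)}(\omega)$. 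For the codimension-two integral over $\overline{\mathcal{H}}$, the $\mathrm{Sym}_2$ symmetrization factor $\tfrac{1}{2}$ and the vanishing $\omega_k^{(1)}(\tau_j)\wedge\omega_l^{(1)}(\tau_j) = 0$ leave only cross terms, giving
\[
 \tfrac{1}{2\pi i}\int g_6^{(1)}\omega_4^{(1)} \cdot \tfrac{1}{2\pi i}\int \omega_\Delta^{(1)} + \tfrac{1}{2\pi i}\int g_6^{(1)}\omega_\Delta^{(1)} \cdot \tfrac{1}{2\pi i}\int \omega_4^{(1)},
\]
where the factors $\int g_f^{(1)}\omega_h^{(1)}$ are computed by Rohrlich's formula and the factors $\int \omega_k^{(1)}$ by the Arakelov-theoretic degrees of $\omega^k$.

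Summing the three contributions, the point values $g_4^{(1)}(i)$ and $g_6^{(1)}(\omega)$ cancel: they appear with opposite signs in the point-evaluation step and in the Rohrlich residues of the codimension-one and codimension-two pieces, the orders $\tfrac{1}{3}$ at $\omega$ and $\tfrac{1}{2}$ at $i$ in Rohrlich's formula being matched by the coefficients $\tfrac{1}{2}$ and $\tfrac{1}{3}$ in the point evaluation. What remains is a combination of $\tfrac{1}{2}+\tfrac{\zeta'(-1)}{\zeta(-1)}$ with coefficient $-6$ and of $\log 12 = 2\log 2 + \log 3$ with coefficient $-\tfrac{2}{3}$, giving the claimed value. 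The main technical obstacle is the careful handling of the log-singular behaviour of $g_\Delta^{(1)}$ at the cusp (where it grows linearly while $\mu$ decays only quadratically) and of $g_4^{(1)}, g_6^{(1)}$ at the elliptic points: the individual integrals are naively divergent, and only the BKK $*$-product framework produces the finite values together with the precise residues required for the cancellations to go through.
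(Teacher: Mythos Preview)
Your approach is essentially the paper's: reduce to $\overline{\mathcal{A}}_1\times\overline{\mathcal{A}}_1$ via the $\mathrm{Sym}_2$ decomposition, split the Green currents and Chern forms using \eqref{eq_Hreduction}, and apply Rohrlich's formula (Lemma~\ref{lem_1dimint}). The paper only works out the first of the three integrals in detail and leaves the remaining two as ``similar''; you go further by sketching how the residual terms $g_4^{(1)}(i)$ and $g_6^{(1)}(\omega)$ from the Rohrlich computation cancel against the point evaluation at $\{i\}\times_{\mathcal H}\{\omega\}$, and by observing that the surviving constant is $-\tfrac{2}{3}\log 12$. One small correction: your closing remark overstates the role of the BKK $*$-product here---the three integrals in this proposition are ordinary (improper) integrals whose convergence is already handled by Rohrlich's theorem; the BKK machinery was used earlier, in Proposition~\ref{prop_5integrals}, to arrive at this decomposition.
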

\begin{proof} We will show the equality
\begin{align*}
\frac{1}{(2\pi i)^{2}}\int\limits_{\overline{\mathcal{H}}}g_{6}\wedge \omega_{12}\wedge\omega_{4}=48\biggl(\frac{1}{2}\zeta(-1)+\zeta'(-1)\biggr)+\frac{1}{3}\log\Vert\Delta(i)\Vert_{\Pet}.\end{align*}
The other terms can be computed in a similar way. 
Noting that $\tau_{12}=0$ on $\mathcal{H}$, we see that the Chern forms $\omega_{12}$ and $\omega_{4}$ reduce to 
\begin{align*}
\omega_{12}=6\cdot4\pi i\left(\frac{\dd x_1\dd y_1}{4\pi y^2_1}+\frac{\dd x_2\dd y_2}{4\pi y^2_2}\right),\quad
\omega_{4}=2\cdot4\pi i\left(\frac{\dd x_1\dd y_1}{4\pi y^2_1}+\frac{\dd x_2\dd y_2}{4\pi y^2_2}\right)
\end{align*}
on $\mathcal{H}$. For their product $\omega_{4}\wedge\omega_{12}$, one obtains
\begin{align*}
\omega_{4}\wedge\omega_{12}=24(4\pi i)^2\frac{\dd x_1\dd y_1}{4\pi y^2_1}\wedge \frac{\dd x_2\dd y_2}{4\pi y^2_2}.
\end{align*}
Recalling the isomorphism $\mathcal{H}\cong\mathrm{Sym}_2(\mathcal{A}_1)$ (insert in Section 3), we note that $\overline{\Ab}_1\times \overline{\Ab}_1$ is a double cover of $\overline{\mathcal{H}}$. We obtain
\begin{align*}
&\frac{1}{(2\pi i)^{2}}\int\limits_{\overline{\mathcal{H}}}g_{6}\wedge \omega_{4}\wedge\omega_{12}=24\frac{(4\pi i)^2}{(2\pi i)^{2}}\int\limits_{\overline{\mathcal{H}}}g_{E_6}\wedge \frac{\dd x_1\dd y_1}{4\pi y^2_1}\wedge \frac{\dd x_2\dd y_2}{4\pi y^2_2}\\
&\phantom{a}=96\int\limits_{\overline{\mathcal{H}}}g_{6}\wedge \frac{\dd x_1\dd y_1}{4\pi y^2_1}\wedge \frac{\dd x_2\dd y_2}{4\pi y^2_2}
=48\int\limits_{\overline{\Ab}_1\times \overline{\Ab}_1}g_{6}\wedge \frac{\dd x_1\dd y_1}{4\pi y^2_1}\wedge \frac{\dd x_2\dd y_2}{4\pi y^2_2}.
\end{align*}
With $g_{6}=-\log\Vert E_6\Vert_{\Pet}$ and the decomposition
 of $E_6$ on $\mathcal{H}$ as $E_6(\tau)=E_6(\tau_1)E_6(\tau_2)$, the latter integral writes as
\begin{align*}
&48\int\limits_{\overline{\Ab}_1\times \overline{\Ab}_1}g_{6}\wedge \frac{\dd x_1\dd y_1}{4\pi y^2_1}\wedge \frac{\dd x_2\dd y_2}{4\pi y^2_2}\\
&\phantom{a}=48\int\limits_{\overline{\Ab}_1\times \overline{\Ab}_1}-\log\Vert E_6(\tau)\Vert_{\Pet}\wedge \frac{\dd x_1\dd y_1}{4\pi y^2_1}\wedge \frac{\dd x_2\dd y_2}{4\pi y^2_2}\\
&\phantom{a}=48\int\limits_{\overline{\Ab}_1}\int\limits_{\overline{\Ab}_1}\left[-\log\Vert E_6(\tau_1)\Vert_{\Pet}-\log\Vert E_6(\tau_2)\Vert_{\Pet}\right] \frac{\dd x_1\dd y_1}{4\pi y^2_1} \frac{\dd x_2\dd y_2}{4\pi y^2_2}.
\end{align*}
With respect to the volume form $\frac{\dd x_1\dd y_1}{4\pi y^2_1}$, the volume of $\mathcal{A}_1$ equals $\frac{1}{12}$. Hence, we can simplify the last integral to
\begin{align}\label{eq_2dint}
&48\int\limits_{\overline{\Ab}_1}\int\limits_{\overline{\Ab}_1}\left[-\log\Vert E_6(\tau_1)\Vert_{\Pet}-\log\Vert E_6(\tau_2)\Vert_{\Pet}\right] \frac{\dd x_1\dd y_1}{4\pi y^2_1} \frac{\dd x_2\dd y_2}{4\pi y^2_2}\notag\\
&\phantom{a}=48\left(\frac{1}{12}\int_{\overline{\Ab}_1}-\log\Vert E_6(\tau_1)\Vert_{\Pet}\frac{\dd x_1\dd y_1}{4\pi y^2_1}+\frac{1}{12}\int\limits_{\overline{\Ab}_1}-\log\Vert E_6(\tau_2)\Vert_{\Pet} \frac{\dd x_2\dd y_2}{4\pi y^2_2}\right) \notag\\
&\phantom{a}=-8\int\limits_{\overline{\Ab}_1}\log\Vert E_6(\tau_1)\Vert_{\Pet} \frac{\dd x_1\dd y_1}{4\pi y^2_1}.
\end{align}

Noting that $\mathrm{ord}_i(E_6)=\frac{1}{2}$, Lemma~\ref{lem_1dimint} yields
 \begin{align}\label{eq_intE6}
\int\limits_{\mathcal{A}_{1}}\log\Vert E_6\Vert_{\Pet} \frac{\dd x\dd y}{4\pi y^2}=-6\biggl(\frac{1}{2}\zeta(-1)+\zeta'(-1)\biggr)-\frac{1}{24}\log\Vert\Delta(i)\Vert_{\Pet}.
\end{align}
Plugging (\ref{eq_intE6}) into (\ref{eq_2dint}), proves the claim of the proposition.
\end{proof}

\begin{prop}\label{prop_boundaryH} The integral in the limit in $(B)$
$$
\frac{4}{2\pi i}\int\limits_{\partial B_\eps(\partial \mathcal{H})}\left[g_4\wedge \dd^c(\sigma_{6,4}g_6)\wedge\omega_{12}-(\sigma_{6,4}g_6)\wedge \dd^c g_4\wedge\omega_{12}\right]
$$
converges absolutely, and its value tends to 0 for $\eps$ approaching 0.
\end{prop}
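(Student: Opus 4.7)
The plan is to reduce the boundary integral to an explicit asymptotic estimate in local coordinates along $\partial\mathcal{H}$. Since $\partial\mathcal{H}\cong\overline{\mathcal{A}}_1$, I work near a smooth point of $\partial\mathcal{H}$ in the local coordinates $(t_1,\tau_2)$ with $t_1=e^{2\pi i\tau_1}$, and take $B_\eps(\partial\mathcal{H})=\{|t_1|<\eps\}$, so $\partial B_\eps=\{y_1=y_\eps\}$ with $y_\eps:=-\log\eps/(2\pi)\to\infty$ as $\eps\to 0$. The symmetry $\mathcal{H}=\mathrm{Sym}_2(\mathcal{A}_1)$ reduces the analysis (after lifting to the double cover $\overline{\mathcal{A}}_1\times\overline{\mathcal{A}}_1$) to the single component $\{y_1=y_\eps,\,y_2\le y_\eps\}$, parametrized by $(x_1,\tau_2)$ with $\tau_2$ in the fundamental domain of $\mathcal{A}_1$.

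I then extract the asymptotic behavior near $\partial\mathcal{H}$. On $\mathcal{H}$ one has $\det(\mathrm{Im}\,\tau)=y_1y_2$ and the product decompositions \eqref{eq_Hreduction}, yielding $g_k=k(\log y_1+\log y_2)+\log|E_k(\tau_1)E_k(\tau_2)|+\mathrm{const.}$ for $k=4,6$, and hence $g_k=k\log y_1+k\log y_2+\log|E_k(\tau_2)|+O(1)+O(e^{-2\pi y_1})$ near $\partial\mathcal{H}$. Correspondingly, the $dx_1$-component of $\dd^c g_k$ is $-k/(4\pi y_1)+O(e^{-2\pi y_1})$, and the smoothness of $\sigma_{6,4}$ in $t_1$ forces the $dx_1$-component of $\dd^c\sigma_{6,4}$ to be $O(e^{-2\pi y_1})$. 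From the proof of Proposition~\ref{prop_lowerdim}, $\omega_{12}|_{\mathcal{H}}=6i(dx_1\,dy_1/y_1^2+dx_2\,dy_2/y_2^2)$; since $dy_1=0$ on $\partial B_\eps$ only the $\tau_2$-piece $\omega_{12}|_{\partial B_\eps}=6i\,dx_2\,dy_2/y_2^2$ survives. Wedging with this 2-form annihilates every $dx_2$- or $dy_2$-component of a 1-form, so combined with $dy_1=0$ only the $dx_1$-components of $\dd^c g_4$ and $\dd^c(\sigma_{6,4}g_6)$ contribute. Modulo exponentially small errors this gives on $\partial B_\eps$
\begin{align*}
&\bigl[g_4\wedge\dd^c(\sigma_{6,4}g_6)\wedge\omega_{12}-(\sigma_{6,4}g_6)\wedge\dd^c g_4\wedge\omega_{12}\bigr]\big|_{\partial B_\eps}\\
&\qquad=-\frac{\sigma_{6,4}\,(6g_4-4g_6)}{4\pi y_\eps}\cdot\frac{6i\,dx_1\,dx_2\,dy_2}{y_2^2}.
\end{align*}

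Substituting the expansions yields $6g_4-4g_6=6\log|E_4(\tau_2)|-4\log|E_6(\tau_2)|+O(1)+O(e^{-2\pi y_\eps})$, because both the $\log y_1$ and $\log y_2$ contributions cancel thanks to the weight relation $6\cdot 4=4\cdot 6$. Hence the integrand on $\partial B_\eps$ is dominated in absolute value by $\frac{C}{y_\eps\,y_2^2}\bigl(1+|\log|E_4(\tau_2)||+|\log|E_6(\tau_2)||\bigr)\,|dx_1\wedge dx_2\wedge dy_2|$. Integrating over $x_1\in[0,1)$ and $\tau_2$ in the fundamental domain of $\mathcal{A}_1$, and using that $\int_{\mathcal{A}_1}|\log|E_k(\tau_2)||\,dx_2\,dy_2/y_2^2$ is finite (this follows from Rohrlich's formula, Lemma~\ref{lem_1dimint}), bounds the $\tau_2$-integral uniformly in $\eps$; the total integral is therefore $O(1/y_\eps)=O(1/\log(1/\eps))$, which tends to $0$. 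The same pointwise bound gives absolute convergence for every fixed $\eps>0$. The main subtlety is correctly identifying which form components survive on $\partial B_\eps$, so that the apparently dominant $\log y_\eps$-terms cancel through the weight relation; without noticing this cancellation one still obtains the weaker but sufficient bound $O(\log y_\eps/y_\eps)$. The ``corner'' $(i\infty,i\infty)\in\partial\mathcal{H}$ is absorbed into the same uniform estimate, since the $1/y_2^2$ decay makes the $\tau_2$-integral converge even as $y_2$ grows up to $y_\eps$.
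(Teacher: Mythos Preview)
Your argument is correct and follows essentially the same route as the paper: reduce $\omega_{12}$ on $\partial B_\eps(\partial\mathcal{H})$ to $6i\,dx_2\,dy_2/y_2^2$, observe that only the $dx_1$-component of the $\dd^c$-terms survives, bound that component by $O(1/y_\eps)$, and control the remaining $\tau_2$-integral by the $L^1$-finiteness of $\log|E_k(\tau_2)|$ over $\mathcal{A}_1$. The paper simply uses the cruder estimate $|g_k(\tau)|\prec\log M+|g_k(\tau_2)|$ to obtain $O(\log M/M)$, while you additionally exploit the weight relation $6\cdot4=4\cdot6$ to cancel the $\log y_\eps$-term and get $O(1/y_\eps)$; as you note yourself, either bound suffices. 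Two minor points: your explicit formula for $g_k$ has the sign and the factor of $2$ off (one has $g_k=-\log|E_k|-\tfrac{k}{2}\log(y_1y_2)+\mathrm{const.}$ on $\mathcal{H}$), and the paper disposes of $\dd^c\sigma_{6,4}$ more directly by choosing $\sigma_{6,4}$ to depend only on $\tau_2$ near $\partial\mathcal{H}$, so that $\dd^c\sigma_{6,4}\wedge\omega_{12}=0$ exactly rather than up to $O(e^{-2\pi y_1})$. Neither of these affects the validity of your bound.
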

\begin{proof}
Noting that $\tau_{12}=0$ and $y_1=M$ are constant on $\partial B_\eps(\partial \mathcal{H})$, the form $\omega_{12}$ reduces to
$$
\omega_{12}=4\pi i\dd\dd^c g_{12}=6i\frac{\dd x_2\dd y_2}{y^2_2}
$$
on $\partial B_\eps(\partial \mathcal{H})$. As $\sigma_{6,4}$ depends only on the coordinate $\tau_2$, the form $\ddc\sigma_{6,4}\wedge\omega_{12}$ vanishes. Estimating $\ddc g_4$ and $\ddc g_6$, we can bound the integrand by
$$
\left\vert g_4\wedge \dd^c(\sigma_{6,4}g_6)\wedge\omega_{12}-(\sigma_{6,4}g_6)\wedge \dd^c g_4\wedge\omega_{12}\right\vert\prec (\vert g_4\vert+\vert g_6\vert)\frac{1}{M}\dd\theta\frac{\dd x_2\dd y_2}{y^2_2}.
$$
Furthermore, note that on $\partial B_\eps(\partial \mathcal{H})$, we have the equality $\tau_1=x_1+iM$, with $M=-\log{\eps}/2\pi$. The equality $E_k(\tau)=E_k(\tau_1)E_k(\tau_2)$ on $\mathcal{H}$ ($k=4,6$), see (insert in Section 3), induces the decomposition of the Green forms $g_4(\tau)$ and $g_6(\tau)$ as
$$
g_k(\tau)=g_k(\tau_1)+g_k(\tau_2) \quad (k=4,6)
$$
on $\partial B_\eps(\partial \mathcal{H})$. As the degree 1 Eisenstein series $E_4(\tau_1)$ and $E_6(\tau_1)$ do not vanish for $M$ approaching $\infty$, the term $$g_k(\tau_1)=-\log\Vert E_k(\tau_1)\Vert_{\Pet}=-\vert E_k(\tau_1)\vert-k/2\log(4\pi)-k/2\log M \quad(k=4,6)$$
 is of order $\log M$ for $\tau_1=x_1+iM$. Therefore, on $\partial B_\eps(\partial \mathcal{H})$, we obtain
$$
\vert g_k(\tau)\vert\prec \log M+\vert g_k(\tau_2)\vert \quad (k=4,6).
$$
The domain of integration $\partial B_\eps(\partial \mathcal{H})\subseteq\mathcal{H}$ is of the form $\partial B_\eps(i\infty)\times_{\mathcal{H}}\Ab_1$.

 Hence, we can estimate
\begin{align*}
&\left\vert\frac{4}{2\pi i}\int\limits_{\partial B_\eps(\partial \mathcal{H})}\left[g_4\wedge \dd^c(\sigma_{6,4}g_6)\wedge\omega_{12}-(\sigma_{6,4}g_6)\wedge \dd^c g_4\wedge\omega_{12}\right]\right\vert\\
&\phantom{aaa}\prec \frac{\log M}{M}+ \frac{1}{M}\int\limits_0^{2\pi}\int\limits_{\Ab_1} \left[\vert g_4(\tau_2)\vert+\vert g_6(\tau_2)\vert \right]\frac{\dd x_2\dd y_2}{y^2_2}\dd\theta.
\end{align*}
As the integrals of $g_4(\tau_2)$ and $g_6(\tau_2)$ over $\Ab_1$ converge absolutely, the last sum vanishes for $\eps$ approaching $0$, and, hence, $M$ approaching $\infty$, and the claim follows.
\end{proof}
The two previous theorems imply the following result.
\begin{thm}\label{thm_boundaryterm} The term $(B)$ in Proposition~\ref{prop_5integrals}
converges absolutely, and its value tends to 0 for $\eps$ approaching 0.
\end{thm}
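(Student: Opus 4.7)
The plan is to recognize Theorem \ref{thm_boundaryterm} as a direct packaging of Propositions \ref{prop_lowerdim} and \ref{prop_boundaryH}, which together handle the two distinct components of $(B)$: the three $\eps$-independent integrals over the lower-dimensional cycles $\overline{\mathcal{H}}$, $\{i\} \times_{\mathcal{H}} \overline{\Ab}_1$, $\{i\} \times_{\mathcal{H}} \{\omega\}$, and the single $\eps$-dependent boundary integral inside the $-\lim_{\eps \to 0}$.

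First, I would invoke Proposition \ref{prop_lowerdim} for the three $\eps$-independent integrals. Its proof uses the K\"unneth-style decomposition $E_k(\tau) = E_k(\tau_1) E_k(\tau_2)$ on $\mathcal{H}$ (see \eqref{eq_Hreduction}) together with the double-cover $\overline{\Ab}_1 \times \overline{\Ab}_1 \to \overline{\mathcal{H}}$ to reduce each integral to a one-dimensional integral over $\Ab_1$, which is then evaluated via Rohrlich's formula (Lemma \ref{lem_1dimint}). This gives absolute convergence and a finite explicit value for these three summands. Next, I would invoke Proposition \ref{prop_boundaryH} for the boundary integral: on $\partial B_\eps(\partial \mathcal{H})$ the coordinates satisfy $\tau_{12} = 0$ and $y_1 = M = -\log\eps/(2\pi)$, so $\omega_{12}$ collapses to $6 i\, \dd x_2\, \dd y_2 / y_2^2$ and $\ddc \sigma_{6,4} \wedge \omega_{12} = 0$. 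Combined with the bound $|g_k(\tau)| \prec \log M + |g_k(\tau_2)|$ (from the splitting of $g_k$ on $\mathcal{H}$ and the non-vanishing of $E_4(\tau_1), E_6(\tau_1)$ at $\tau_1 = i\infty$) and the absolute integrability of $|g_k(\tau_2)|$ on $\Ab_1$, the boundary integral is of order $O(\log M / M)$, hence tends to $0$ as $\eps \to 0$.

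Assembling the pieces, each of the four summands of $(B)$ is absolutely convergent, so $(B)$ itself is absolutely convergent. Since the only $\eps$-dependent contribution to $(B)$ is the boundary integral, Proposition \ref{prop_boundaryH} shows that this summand vanishes as $\eps \to 0$, giving the claim that the $\eps$-dependent part of $(B)$ tends to $0$ in the limit. The main obstacle --- the delicate boundary growth estimate near $\partial \mathcal{H}$, especially the need to bound $g_4$ and $g_6$ simultaneously near both components of $\partial \mathcal{H}$ without introducing divergences from the vanishing of $\ddc \sigma_{6,4}$-type terms --- has already been overcome inside Proposition \ref{prop_boundaryH}; at this step one merely collates the two preceding statements into the claim of Theorem \ref{thm_boundaryterm}.
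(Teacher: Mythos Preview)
Your proposal is correct and matches the paper's own approach exactly: the paper's proof of Theorem~\ref{thm_boundaryterm} is literally the single sentence ``The two previous theorems imply the following result,'' i.e., Proposition~\ref{prop_lowerdim} handles the three $\eps$-independent integrals and Proposition~\ref{prop_boundaryH} handles the $\eps$-dependent boundary term. Your reading that ``tends to $0$'' refers to the $\eps$-dependent portion is the only one consistent with how $(B)$ is used in the proof of Theorem~\ref{thm_main}, where the non-zero value from Proposition~\ref{prop_lowerdim} is retained.
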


\section{A boundary integral  }\label{sect:5}

In this section, we treat the term $(A)$ arising in Proposition \ref{prop_5integrals} of Section \ref{sect:3}.
First, we are able to trace back one part of $(A)$ to a deep result by Kudla.
The main part of this section is then devoted to proof of the vanishing of remaining boundary integral.

\begin{prop}\label{prop_higherdim} The limit for the first term in $(A)$ in Proposition~\ref{prop_5integrals} 
exists and has the value
\begin{align*}
&\frac{1}{(2\pi i)^3}\int\limits_{\overline{\mathcal{A}}_2}g_{10}\wedge \omega_{6}\wedge\omega_{4}\wedge\omega_{12}\\
&\phantom{aaaaaaaaa}=10\cdot 6\cdot 4\cdot 12\,\zeta(-3)\zeta(-1)\left(\frac{4}{3}+2\frac{\zeta'(-3)}{\zeta(-3)}-\frac{\zeta'(-1)}{\zeta(-1)}+\frac{6}{5}\log 2\right).
\end{align*}
\end{prop}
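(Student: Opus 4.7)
The plan is to reduce the top-degree integral $\int_{\overline{\mathcal A}_2}g_{10}\wedge\omega_6\wedge\omega_4\wedge\omega_{12}$ to an integral of $\log\|\chi_{10}\|_\Pet$ against the $\mathrm{Sp}_4$-invariant volume form on $\mathcal{A}_2$, and then to invoke Kudla's integral formula for Borcherds forms \cite{kudla}. Each of $E_4,E_6,\chi_{12}$ is a section of $\overline{\omega}_2^{\otimes k}$ ($k=4,6,12$), and from $g_k=\log|f_k|+\tfrac{k}{2}\log((4\pi)^2\det y)$ together with $\dd\ddc\log|f_k|=0$ off $\divi(f_k)$, one reads off $\omega_k=k\,\eta$, where $\eta:=2\pi i\,\dd\ddc\log\det y$ is the first Chern form of the metrized Hodge bundle $\overline{\omega}_2$. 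Hence
\[
\omega_6\wedge\omega_4\wedge\omega_{12}=6\cdot 4\cdot 12\,\eta^3,
\]
and a direct calculation in the local coordinates on $\MH_2$ identifies $(2\pi i)^{-3}\eta^3$ with an explicit positive multiple of the Siegel invariant volume form $\dd\mu_{\mathrm{Sp}_4}=\det(y)^{-3}\,\dd x\,\dd y$. The integral thus reduces to $C\cdot\int_{\mathcal{A}_2}\log\|\chi_{10}\|_\Pet\,\dd\mu_{\mathrm{Sp}_4}$ for an explicit constant $C$, and the logarithmic singularities of $\log\|\chi_{10}\|_\Pet$ along $\partial\mathcal A_2\cup\overline{\mathcal H}$ are integrable against the finite invariant measure, so the limit defining the complex integral exists.

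The evaluation then proceeds via Kudla's theorem. Via the exceptional isomorphism $\mathrm{Sp}_4\cong\mathrm{Spin}(2,3)$, $\mathcal{A}_2$ is an orthogonal Shimura variety of signature $(2,3)$, and up to the normalizing factor $2^{-12}$ in its $\vartheta$-product definition, $\chi_{10}$ is a Borcherds lift in the sense of \cite{kudla}. Kudla's formula then expresses $\int_{\mathcal A_2}\log\|\chi_{10}\|_\Pet\,\dd\mu$ as a linear combination of $\vol(\mathcal A_2)$, weighted by the weight $10$ of $\chi_{10}$ and by derivatives at $s=0$ of the Eisenstein series associated with the quadratic form for $SO(2,3)$, whose $L$-function is, up to shift, the product $\zeta(s)\zeta(s+2)$. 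This is precisely where both $\zeta'(-3)/\zeta(-3)$ and $\zeta'(-1)/\zeta(-1)$ enter; combined with Siegel's formula $\vol(\mathcal A_2)\propto\zeta(-1)\zeta(-3)$ and the factor $288$, it produces the right-hand side of the proposition.

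The main obstacle is the bookkeeping of normalizations rather than any conceptual subtlety. The constant $288$, the explicit ratio between $\eta^3$ and $\dd\mu_{\mathrm{Sp}_4}$, the coefficient in Siegel's volume formula, and especially the comparison between Kudla's normalization for Borcherds products and Igusa's normalization of $\chi_{10}$ (the factor $2^{-12}$ in its $\vartheta$-product) all must be tracked with care. The $\tfrac{6}{5}\log 2$ in the final answer reflects this last comparison, while the constant $\tfrac{4}{3}$ is the explicit constant term in the Laurent expansion of the relevant Eisenstein series at $s=0$ appearing in Kudla's formula.
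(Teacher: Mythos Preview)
Your approach is correct and coincides with the paper's: the paper's proof consists of the single sentence ``This follows from a result of Kudla, see \cite{kudla},'' so your proposal is in fact a more detailed elaboration of the same route, reducing the integral to $\int_{\mathcal A_2}\log\|\chi_{10}\|_{\Pet}\,\dd\mu$ and invoking Kudla's Borcherds-form integral via the $\mathrm{Sp}_4\cong\mathrm{Spin}(2,3)$ identification. One small caution: the paper uses the convention $g_k=-\log\|f_k\|_{\Pet}$ (see the proof of Proposition~\ref{prop_lowerdim}), not $g_k=+\log\|f_k\|_{\Pet}$, so your expression for $\eta$ picks up a sign, though this is exactly the kind of normalization bookkeeping you already flagged.
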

\begin{proof}
This follows from a result of Kudla, see \cite{kudla}.
\end{proof}
In the following, we will prove the vanishing of term $(B)$ in Proposition~\ref{prop_5integrals}. The proof of the vanishing of the first integral in $(B)$ requires several steps, simplifying the integrand and dividing the domain of integration. We will prepare the proof with some preliminary lemmata.

\begin{lem}\label{lem_integrand1}
On $\partial B_\eps (\partial \Ab_2)$, we have the equality
\begin{align*}
&\frac{1}{(4\pi i)^2}(g_{6}*g_{4}*g_{12})\wedge \dd^c g_{10}\\
&\phantom{a}=(\sigma_{4,6}g_{6})\wedge\dd\ddc g_4\wedge\dd\ddc g_{12}\wedge\dd^c g_{10}+\dd\ddc (\sigma_{6,4}g_6)\wedge g_4\wedge \dd\ddc g_{12}\wedge \ddc g_{10}.
\end{align*}
\end{lem}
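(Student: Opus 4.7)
The plan is to unfold the $*$-product \eqref{eq_starprod} twice, exploiting the fact that $\divi(\chi_{12})=\partial\mathcal{A}_2$, so that $g_{12}$ is smooth away from a small tubular neighborhood of $\partial\mathcal{A}_2$ and the outer $*$-product with $g_{12}$ collapses to an ordinary wedge with $\dd\ddc g_{12}$.

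First I would unfold the inner product $g_6*g_4$ by a direct application of \eqref{eq_starprod} with $Y=\divi(E_6)$, $Z=\divi(E_4)$, and the partition of unity $\{\sigma_{4,6},\sigma_{6,4}\}$ from the remark after Proposition~\ref{prop_integrals}. This produces the expansion
\[
g_6*g_4 = 4\pi i\bigl((\sigma_{4,6}g_6)\wedge\dd\ddc g_4 + \dd\ddc(\sigma_{6,4}g_6)\wedge g_4\bigr),
\]
which is the same identity already used in the proof of Proposition~\ref{prop_5integrals}, so no fresh calculation is required here.

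Next I would apply \eqref{eq_starprod} again to $(g_6*g_4)*g_{12}$, choosing a partition of unity $\{\sigma',\sigma''\}$ adapted to the cycles $\divi(E_4)\cdot\divi(E_6)$ and $\divi(\chi_{12})=\partial\mathcal{A}_2$. By Lemma~\ref{lem_properintersection} these cycles meet only in a set of strictly higher codimension than required (in fact at most zero-dimensionally), so the partition can be arranged with $\sigma''\equiv 1$ on a tubular neighborhood $U$ of $\partial\mathcal{A}_2$ and $\sigma'\equiv 0$ on $U$. For $\eps>0$ sufficiently small one has $\partial B_\eps(\partial\mathcal{A}_2)\subset U$, and pointwise on this set the ``second half'' of \eqref{eq_starprod} vanishes, yielding
\[
(g_6*g_4)*g_{12} = 4\pi i\,(g_6*g_4)\wedge \dd\ddc g_{12}.
\]

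Substituting the first identity into the second, wedging with $\dd^c g_{10}$, and dividing by $(4\pi i)^2$ gives precisely the right-hand side of the lemma. The only delicate point is verifying that a partition of unity with the above properties exists; this reduces to checking that in the embedded-resolution construction of Section~\ref{sect:2.3} the components of $\divi(E_4)\cdot\divi(E_6)$ not contained in $\partial\mathcal{A}_2$ can be separated from $\partial\mathcal{A}_2$ itself. This separation follows directly from the proper-intersection statement of Lemma~\ref{lem_properintersection}, which I expect to be the main (but quite manageable) obstacle in writing the argument rigorously.
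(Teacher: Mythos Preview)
Your proposal is correct and follows essentially the same route as the paper: unfold the outer $*$-product with $g_{12}$ using a partition of unity adapted to $\divi(E_6)\cdot\divi(E_4)$ and $\divi(\chi_{12})=\partial\mathcal{A}_2$, observe that near $\partial\mathcal{A}_2$ one cutoff is identically $1$ and the other identically $0$ so that the second summand of \eqref{eq_starprod} drops out, and then expand the remaining $g_6*g_4$ by a second application of \eqref{eq_starprod}. The only cosmetic difference is the order of presentation (you expand the inner product first, the paper the outer one), and the paper notes directly that $\divi(E_6)\cdot\divi(E_4)$ and $\partial\mathcal{A}_2$ are in fact disjoint, which makes the existence of the desired partition immediate rather than something to worry about.
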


\begin{proof}
To compute the form $g_{6}*g_{4}*g_{12}$, we first consider the non-intersecting cycles $\divi(E_6)\cdot\divi(E_4)$ and $\divi(\chi_{12})=\partial\mathcal{A}_2$. Let now $\{\sigma_{(6,4), 12},\sigma_{12,(6,4)}\}$ be a partition of unity adapted to $\divi(E_6)\cdot\divi(E_4)$ and $\divi(\chi_{12})$. By the definition of the $*$-product \eqref{eq_starprod}, we obtain 
\begin{align*}
&g_{6}*g_{4}*g_{12}=4\pi i\bigl((\sigma_{12,(6,4)}(g_{6}*g_{4}))\wedge\dd\dd^cg_{12}+\dd\dd^c(\sigma_{(6,4), 12}(g_{6}*g_{4}))\wedge g_{12}\bigr).
\end{align*}
In a neighbourhood of $\divi(\chi_{12})=\partial\Ab_2$, we have $\sigma_{(6,4), 12}=0$ and $\sigma_{12,(6,4)}=1$. For $\eps$ small, we therefore obtain on $\partial B_{\eps}(\partial\Ab_2)$ the equality
\begin{align*}
g_{6}*g_{4}*g_{12}=4\pi i(g_{6}*g_{4})\wedge\dd\dd^cg_{12}.
\end{align*} 
Using the definition of the $*$-product to compute $g_{6}*g_{4}$,  we then obtain on $\partial B_{\eps}(\partial\Ab_2)$ the equality
\begin{align*}
&g_{6}*g_{4}*g_{12}=(4\pi i)^2\bigl((\sigma_{4,6}g_{6})\wedge\dd\ddc g_4\wedge\dd\ddc g_{12}+\dd\ddc (\sigma_{6,4}g_6)\wedge g_4\wedge \dd\ddc g_{12}\bigr),
\end{align*}
with $\{\sigma_{4,6},\sigma_{6,4}\}$ a partition of unity  adapted to $\divi(E_4)$ and $\divi(E_{6})$. This proves the claim of the lemma.
\end{proof}

\begin{lem}\label{lem_integrand2}
On $\partial B_\eps (\partial \Ab_2)$, we have the equality
\begin{align*}
&\frac{1}{(4\pi i)^2}g_{10}\wedge \dd^c(g_{6}*g_{4}*g_{12})\\
&\phantom{a}=g_{10}\wedge \ddc(\sigma_{4,6}g_{6})\wedge\dd\ddc g_4\wedge\dd\ddc g_{12}+g_{10}\wedge\dd\ddc (\sigma_{6,4}g_6)\wedge \ddc g_4\wedge \dd\ddc g_{12}.
\end{align*}
\end{lem}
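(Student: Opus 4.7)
The plan is to mirror the proof of Lemma~\ref{lem_integrand1}, but to apply $\ddc$ rather than merely restrict. That argument actually establishes the identity
\[
g_{6}*g_{4}*g_{12}=(4\pi i)^{2}\bigl((\sigma_{4,6}g_{6})\wedge\dd\ddc g_{4}\wedge\dd\ddc g_{12}+\dd\ddc(\sigma_{6,4}g_{6})\wedge g_{4}\wedge\dd\ddc g_{12}\bigr)
\]
on a whole open neighbourhood $U$ of $\partial\mathcal{A}_2=\divi(\chi_{12})$, since on $U$ one has $\sigma_{(6,4),12}=0$ and $\sigma_{12,(6,4)}=1$ (the statement is only recorded on $\partial B_{\eps}(\partial\mathcal{A}_2)$, but the reasoning is valid throughout $U$). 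Since the identity holds on an open set, I may apply $\ddc$ to both sides and subsequently wedge with $g_{10}$; restricting the resulting equality back to $\partial B_{\eps}(\partial\mathcal{A}_2)\subset U$ for $\eps$ small will give the claim.

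The core of the calculation uses only the Leibniz rule for the first-order operator $\ddc$ together with the annihilation identity $\ddc(\dd\ddc\alpha)=0$ (which follows from $(\ddc)^{2}=0$ and the anticommutation $\ddc\dd=-\dd\ddc$). In the first summand, $\ddc$ kills both $\dd\ddc g_{4}$ and $\dd\ddc g_{12}$, leaving only $\ddc(\sigma_{4,6}g_{6})\wedge\dd\ddc g_{4}\wedge\dd\ddc g_{12}$. In the second summand, $\ddc$ kills both $\dd\ddc(\sigma_{6,4}g_{6})$ and $\dd\ddc g_{12}$, leaving $\dd\ddc(\sigma_{6,4}g_{6})\wedge\ddc g_{4}\wedge\dd\ddc g_{12}$; the sign is $+1$ since $\dd\ddc(\sigma_{6,4}g_{6})$ is of even degree. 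Dividing through by $(4\pi i)^{2}$ and wedging with $g_{10}$ on the left then reproduces the two terms on the right-hand side of the asserted identity.

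The only subtlety, and what I would flag as the main technical point, is the justification of these Leibniz manipulations at the level of currents, because $g_{6}$ has logarithmic singularities along $\divi(E_{6})$, and by Lemma~\ref{lem_properintersection} the cycle $\divi(E_{6})$ does meet $\partial\mathcal{A}_{2}$. However, the product $\sigma_{4,6}g_{6}$ is identically zero in a neighbourhood of $\divi(E_{6})$ and hence smooth there, so $\ddc(\sigma_{4,6}g_{6})$ is a genuine smooth form; and $\dd\ddc(\sigma_{6,4}g_{6})$ is a well-defined current in the Burgos--Kramer--K\"uhn calculus of \cite{bkk}, against which the Leibniz rule applies in the standard way. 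Once these routine checks are made, the formal computation sketched above is fully justified.
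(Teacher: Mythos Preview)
Your proposal is correct and follows precisely the route the paper indicates: the paper's own proof reads in its entirety ``The claim follows by a straight-forward computation, as in the proof of Lemma~\ref{lem_integrand1},'' and you have supplied exactly that computation. Your flagging of the singularity issue and its resolution via the vanishing of $\sigma_{4,6}$ near $\divi(E_6)$ (and of $\sigma_{6,4}$ near $\divi(E_4)$) is a welcome elaboration beyond what the paper records.
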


\begin{proof}
The claim follows by a straight-forward computation, as in the proof of Lemma \ref{lem_integrand1}.
\end{proof}

To give estimates for the integrand $(g_{6}*g_{4}*g_{12})\wedge \dd^c g_{10}-g_{10}\wedge \dd^c(g_{6}*g_{4}*g_{12})$, we will embed the domain of integration $\partial B_\eps(\partial\Ab_2)$ into a fundamental domain for $\Gamma_2\backslash\MH_2$.
\begin{lem}\label{lem_fundamentalbound}
Let $M=-\log{\eps}/2\pi$. There is a fundamental domain $\mathcal{F}\subseteq\MH_2$ for the action of $\Gamma_2$ on $\MH_2$ such that the preimage of $\partial B_\eps(\partial\Ab_2)$ under the quotient morphism $\pi_2\colon\MH_2\longrightarrow \Ab_2$ restricted to $\mathcal{F}$ is contained in the set $S_{\eps}$ given by restricting the local coordinates under consideration as follows:
\begin{align}\label{eq_seps}
y_1=M,\; y_2\in[1/2, M],\; y_{12}\in [0,y_2/2], \;\theta\in [0,2\pi), \; x_2, x_{12}\in[-1/2, 1/2].
\end{align}
\end{lem}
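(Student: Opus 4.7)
The plan is to realize the preimage of $\partial B_\eps(\partial \Ab_2)$ as a single slice of a Siegel fundamental domain. Since the local coordinate near the toroidal boundary is $t = e^{2\pi i \tau_1}$, the locus $\{|t| = \eps\}$ pulls back to $\{y_1 = M\}$ in $\MH_2$, where $M = -\log\eps/(2\pi)$. So it suffices to choose a fundamental domain $\mathcal{F} \subseteq \MH_2$ for $\Gamma_2$ and to verify that $\mathcal{F} \cap \{y_1 = M\}$ lies inside the set $S_\eps$ of the lemma.

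I would take $\mathcal{F}$ to be Siegel's classical fundamental domain for $\Gamma_2$, cut out by three families of constraints: (i) Minkowski reduction of $y = \Ima(\tau)$ in the form $y_1 \geq y_2 > 0$ and $y_{12} \in [0,y_2/2]$; (ii) translation reduction $x_1, x_2, x_{12} \in [-1/2,1/2]$; and (iii) Siegel's inequality $|\det(C\tau+D)| \geq 1$ for every bottom block $(C,D)$ occurring in a matrix of $\Gamma_2$. The convention $y_1 \geq y_2$ in (i) is chosen precisely so that the boundary component of $\Ab_2$ under consideration is the one reached in the limit $y_1 \to \infty$.

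On the slice $y_1 = M$, most of the bounds of (\ref{eq_seps}) are now immediate: $y_2 \leq M$ follows from $y_1 \geq y_2$, and $y_{12} \in [0, y_2/2]$ is part of (i); the bounds on $x_2, x_{12}$ are part of (ii); and $\theta \in [0,2\pi)$ parametrizes the circle $|t|=\eps$, whose pullback is covered by the translation-reduced range of $x_1$ via $\theta = 2\pi x_1 \pmod{2\pi}$. The only non-trivial bound is $y_2 \geq 1/2$. To extract it, I would apply (iii) to the Klingen-type embedding of $S = \left(\begin{smallmatrix} 0 & -1 \\ 1 & 0\end{smallmatrix}\right) \in \mathrm{SL}_2(\MZ)$ into $\Gamma_2$ that acts only on $\tau_2$; a short block computation shows $\det(C\tau+D) = \tau_2$ for this matrix, so (iii) forces $|\tau_2|^2 = x_2^2 + y_2^2 \geq 1$, and together with $|x_2|\leq 1/2$ this yields $y_2 \geq \sqrt{3}/2 > 1/2$.

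The main obstacle is bookkeeping: one has to pin down which rational boundary component of the Baily--Borel compactification is being approached, and check that the toroidal chart $t = e^{2\pi i \tau_1}$ is adapted to this component, so that a single $\mathcal{F}$ indeed covers $\partial B_\eps(\partial \Ab_2)$ without double-counting $\Gamma_2$-translates of the boundary. All other boundary components are $\Gamma_2$-equivalent to the chosen one and are swept out of $\mathcal{F}$ by the Minkowski convention $y_1 \geq y_2$ together with Siegel's inequality; once this identification is made cleanly, the containment in (\ref{eq_seps}) reduces to the short verification above.
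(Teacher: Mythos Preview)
Your proposal is correct and follows essentially the same route as the paper. The only cosmetic difference is that the paper starts from the standard Siegel domain $\mathcal{F}_2$ (with the convention $y_1\leq y_2$, $2y_{12}\leq y_1$), derives $\tau_1\in\mathcal{F}_1$ from condition (i) with $C=\left(\begin{smallmatrix}1&0\\0&0\end{smallmatrix}\right)$, $D=\left(\begin{smallmatrix}0&0\\0&1\end{smallmatrix}\right)$, and then applies an explicit swap matrix $S\in\Gamma_2$ interchanging $\tau_1$ and $\tau_2$ to obtain $\mathcal{F}=S\mathcal{F}_2$; you instead adopt the swapped convention $y_1\geq y_2$ from the outset and apply the Klingen embedding acting on $\tau_2$, which amounts to the same computation.
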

\begin{proof}
Recall that a fundamental domain $\mathcal{F}_2$ for the action of $\Gamma_2$ on $\MH_2$ is given by the following Minkowski conditions on $\tau=x+iy\in \MH_2$:
\begin{itemize}
\item[(i)] For all $\gamma=\left(\begin{smallmatrix}A&B\\C&D\end{smallmatrix}\right)\in\Gamma_2$ the inequality $\vert\det(C\tau+D)\vert\geq 1$ holds.
\item[(ii)] The matrix $y$ is Minkowski reduced, i.e., for all $l\in\MZ^2$ such that the last $g-k+1=3-k$ entries are relatively prime, we have $y_{k}\leq l^tyl$ $(k=1,2)$; furthermore, $y_{12}\geq 0$ holds.
\item[(iii)] The matrix $x$ satisfies $\vert x_{k}\vert\leq\frac{1}{2}$ $(k=1,2, 12)$.
\end{itemize}
Applying condition (ii) with $k=1$ and $l=\left(\begin{smallmatrix}0\\1\end{smallmatrix}\right)$, one obtains $y_{1}\leq y_{2}$. For $k=2$ and 
$l= \left(\begin{smallmatrix}1\\-1\end{smallmatrix}\right)$, one obtains
$$
y_2\leq \begin{pmatrix}1\\-1\end{pmatrix}^t\begin{pmatrix}y_1&y_{12}\\y_{12}&y_2\end{pmatrix}\begin{pmatrix}1\\-1\end{pmatrix}=y_1+y_2-2y_{12},
$$
and the condition $2y_{12}\leq y_1$ follows. We immediately deduce the inequality $\det y\leq y_{1}y_{2}\leq 2\det y$. Furthermore, applying condition (i) for $C=\left(\begin{smallmatrix}1&0\\0&0\end{smallmatrix}\right)$ and $D=\left(\begin{smallmatrix}0&0\\0&1\end{smallmatrix}\right)$, together with condition (iii), one gets $\tau_{1}\in \mathcal{F}_1$, where $\mathcal{F}_1$ is the standard fundamental domain for the action of $\Gamma_1$ on $\MH_1$. Hence, we can assume $y_{1}\geq\sqrt{3}/2>1/2$. Interchanging the roles of $\tau_1$ and $\tau_2$ by translating $\mathcal{F}_2$ employing the action of the matrix 
$$
S:=\begin{pmatrix}0&1&0&0\\ 1&0&0&0\\0&0&0&1\\0&0&1&0
\end{pmatrix}\in \Gamma_2.
$$
on $\MH_2$, we obtain a new fundamental domain $\mathcal{F}=S\mathcal{F}_2$ such that $\partial B_\eps(\partial\Ab_2)$ lies in the set $S_\eps$ as claimed. 
\end{proof}
\begin{rem}
From now on, we will identify $\partial B_\eps(\partial\Ab_2)$ with its preimage in the fundamental domain $\mathcal{F}$ introduced in Lemma~\ref{lem_fundamentalbound}.
\end{rem}
\begin{lem}\label{lem_newpartition}
The partition of unity $\{\sigma_{4,6}, \sigma_{6,4}\}$ adapted to the divisors $\divi(E_4)$ and $\divi(E_6)$ can be chosen in a way such that in a small neighbourhood of $\partial\Ab_2$, and hence on $\partial B_\eps(\partial \Ab_2)$ (for small $\eps>0$), the following properties hold:
\begin{itemize}
\item[(i)] The value of $\sigma_{4,6}$ and $\sigma_{6,4}$ depends only on the value of the coordinate $\tau_2$, i.e., $\sigma_{4,6}(\tau)=\sigma_{4,6}(\tau_2)$ and $\sigma_{6,4}(\tau)=\sigma_{6,4}(\tau_2)$.
\item[(ii)] We have $\sigma_{4,6}(\tau_2)=\sigma_{6,4}(\tau_2)=1/2$ for $y_2>2$.
\end{itemize}
\end{lem}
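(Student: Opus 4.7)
The plan is to exploit the reduction formula \eqref{eq_Dreduction}, which gives $E_k(\tau)|_{\partial\Ab_2}=E_k(\tau_2)$ for $k=4,6$. Consequently, on the boundary stratum, $\divi(E_4)$ restricts to the locus where $\tau_2\in\Gamma_1\cdot\omega$ (with $\tau_{12}$ free), and $\divi(E_6)$ to the locus where $\tau_2\in\Gamma_1\cdot i$. Since $\omega$ and $i$ lie in the standard fundamental domain $\mathcal{F}_1$ of $\Gamma_1$ with $y(\omega)=\sqrt{3}/2$ and $y(i)=1$, and since the fundamental-domain representative maximizes $y$ along a $\Gamma_1$-orbit, the entire orbits $\Gamma_1\cdot\omega$ and $\Gamma_1\cdot i$ lie in $\{y\leq 1\}$. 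By continuity of $E_4,E_6$ in the local boundary coordinates $(t,\tau_2,\tau_{12})$, there exists a sufficiently thin tubular neighborhood $W$ of $\partial\Ab_2$ in $\overline{\Ab}_2$ within which $\divi(E_4)\cap W$ (resp.\ $\divi(E_6)\cap W$) is confined to a small $\tau_2$-neighborhood of $\Gamma_1\cdot\omega$ (resp.\ $\Gamma_1\cdot i$). After shrinking $W$, these two loci are disjoint and both contained in $\{y_2<3/2\}$. This local-structure observation is what makes the claim possible.

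Given this, I would construct smooth $\Gamma_1$-invariant cutoffs $\chi_4,\chi_6:\MH_1\to[0,1]$ with disjoint support, where $\chi_4\equiv 1$ on a small $\Gamma_1$-invariant neighborhood of $\Gamma_1\cdot\omega$, $\chi_6\equiv 1$ on one of $\Gamma_1\cdot i$, and both supports contained in $\Gamma_1\cdot\{\tau\in\mathcal{F}_1:y\leq 3/2\}\subset\{y\leq 3/2\}$. Such cutoffs exist by standard bump-function constructions on the orbifold $\Ab_1$ near its two elliptic points. Then set
\[
\sigma_{4,6}(\tau_2):=\frac{1}{2}\bigl(1+\chi_4(\tau_2)-\chi_6(\tau_2)\bigr),\qquad \sigma_{6,4}(\tau_2):=\frac{1}{2}\bigl(1-\chi_4(\tau_2)+\chi_6(\tau_2)\bigr).
\]
By construction, these are $\Gamma_1$-invariant smooth functions on $\MH_1$, take values in $[0,1]$, sum to $1$, equal $1$ (resp.\ $0$) in a neighborhood of $\Gamma_1\cdot\omega$ (resp.\ $\Gamma_1\cdot i$), and both equal $1/2$ whenever $\chi_4$ and $\chi_6$ vanish—in particular for $y_2>2$. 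Pulling back via the projection $(t,\tau_2,\tau_{12})\mapsto\tau_2$ yields smooth functions on $W$ depending only on $\tau_2$ and satisfying (i) and (ii). After further shrinking $W$, the inclusion $\divi(E_k)\cap W\subset\{\chi_k\equiv 1\}$ guarantees the adaptedness conditions of Section~\ref{sect:2.3} in $W$.

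Finally, to obtain a globally defined partition on all of $\overline{\Ab}_2$, I would take any partition $\{\sigma_{4,6}',\sigma_{6,4}'\}$ adapted to $\divi(E_4)$ and $\divi(E_6)$ on $\overline{\Ab}_2$ (whose existence follows from the construction recalled in Section~\ref{sect:2.3}) and interpolate: pick a smooth cutoff $\rho(t)$ in the boundary-transverse coordinate, equal to $1$ for $|t|$ small (in particular on $\partial B_\eps(\partial\Ab_2)$ for small $\eps$) and vanishing outside $W$, and set
\[
\tilde\sigma_{4,6}:=\rho(t)\,\sigma_{4,6}+\bigl(1-\rho(t)\bigr)\sigma_{4,6}',\qquad \tilde\sigma_{6,4}:=\rho(t)\,\sigma_{6,4}+\bigl(1-\rho(t)\bigr)\sigma_{6,4}'.
\]
Since the adaptedness conditions ``$\equiv 1$ near $\widehat{Y}$'' and ``$\equiv 1$ near $\widehat{Z}$'' are stable under convex combinations of partitions both satisfying them, and since both the local partition $\{\sigma_{4,6},\sigma_{6,4}\}$ on $W$ and the global one $\{\sigma_{4,6}',\sigma_{6,4}'\}$ do so, $\{\tilde\sigma_{4,6},\tilde\sigma_{6,4}\}$ is a valid adapted partition agreeing near $\partial\Ab_2$ with the local $\tau_2$-only partition. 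The main delicate step is to ensure that in the collar region where $\rho$ transitions, both summands individually retain the adaptedness property near $\divi(E_4)$ and $\divi(E_6)$; this reduces to choosing $W$ thin enough that the local structure analysis of the first paragraph applies, and is the only place where some care is required.
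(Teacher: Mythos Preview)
Your argument is correct and rests on the same two observations the paper uses: the reduction formula \eqref{eq_Dreduction} forces $\divi(E_4)$ and $\divi(E_6)$ near $\partial\Ab_2$ to be cut out by conditions on $\tau_2$ alone, and the relevant $\Gamma_1$-orbits of $\omega$ and $i$ lie in $\{y_2<2\}$. The paper's proof simply asserts the conclusion from these observations, whereas you supply an explicit bump-function construction and a global patching step via convex interpolation; this is added rigor rather than a different route.
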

\begin{proof} By (to be inserted), the value of $E_4$ and $E_6$ restricted to $\partial\mathcal{A}_2$ only depends on the coordinate $\tau_2$. Hence, we can assume the same for the partition of unity in a small neighbourhood of $\partial\Ab_2$. Furthermore, we note that the cycles $\divi(E_4)\cdot\partial\Ab_2$ and $\divi(E_6)\cdot\partial\Ab_2$ are supported in the open set defined by the condition $y_2<2$, as $\mathrm{Im}(i), \mathrm{Im}(\omega)<2$. Therefore, we can choose $\sigma_{4,6}(\tau_2)$ and $\sigma(\tau_2)$ to equal $1/2$ outside this range.
\end{proof}
\begin{lem}\label{lem_thetaeps} For any $\vartheta$-series $\vartheta_{a,b}$ as in (\ref{eq_thetaseries}), we have the bound 
$$\left\vert\frac{\partial}{\partial x_1}\vartheta_{a,b}(\tau)\right\vert=2\pi\left\vert\frac{\partial}{\partial \theta}\vartheta_{a,b}(\tau)\right\vert\prec\eps^{\frac{1}{16}}$$
for $\tau\in S_\eps$ defined in Lemma~\ref{lem_fundamentalbound}. 
\end{lem}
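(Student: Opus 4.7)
The plan is to prove this by direct term-by-term estimation of the $\vartheta$-series. First, I would establish the equality of derivatives, which is a change-of-coordinates statement: near $\partial\Ab_2$ the local coordinate is $t=\exp(2\pi i\tau_1)$, so on the tube $|t|=\eps$ one may write $t=\eps e^{i\theta}$, yielding $\tau_1 = \theta/(2\pi) + iy_1$ with $y_1=M$ constant. Hence $\partial/\partial\theta = (1/(2\pi))\,\partial/\partial x_1$, which gives the stated factor $2\pi$.

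For the main bound, I would differentiate the series termwise. Writing $u=n_1+a_1/2$ and $v=n_2+a_2/2$, the derivative is
\[
\frac{\partial}{\partial x_1}\vartheta_{a,b}(\tau)=\pi i\sum_{n_1,n_2\in\MZ}u^2\,e^{\pi i(u^2\tau_1+2uv\tau_{12}+v^2\tau_2+ub_1+vb_2)}.
\]
The factor $u^2$ kills the single index $u=0$ (which arises only when $a_1=0$, $n_1=0$), so every surviving term has $|u|\geq 1/2$. Taking absolute values gives the upper bound
\[
\bigg|\frac{\partial}{\partial x_1}\vartheta_{a,b}(\tau)\bigg|\leq \pi\sum_{u\neq 0,\,v}u^2\,e^{-\pi Q(u,v)},\qquad Q(u,v):=u^2y_1+2uvy_{12}+v^2y_2.
\]

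The key step is a careful estimate of $Q$ using the Minkowski reduction from Lemma~\ref{lem_fundamentalbound}. On $S_\eps$ we have $y_1=M$, $y_2\in[1/2,M]$, and $|y_{12}|\leq y_2/2$. By AM--GM, $2|uv||y_{12}|\leq |uv|y_2\leq (u^2+v^2)y_2/2$, so
\[
Q(u,v)\geq u^2(y_1-y_2/2)+v^2y_2/2\geq u^2\,y_1/2+v^2\,y_2/2,
\]
where the last inequality uses $y_2\leq y_1$. The sum then factors as
\[
\Bigl(\sum_{u\neq 0}u^2\,e^{-\pi u^2 M/2}\Bigr)\Bigl(\sum_v e^{-\pi v^2 y_2/2}\Bigr).
\]
The second factor is bounded by a constant since $y_2\geq 1/2$. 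For the first, extracting the leading exponential by factoring out $e^{-\pi M/8}$ (corresponding to the minimal value $u^2=1/4$), the remaining series $\sum_{u\neq 0}u^2 e^{-\pi(u^2-1/4)M/2}$ stays uniformly bounded for $M$ large. Thus the whole sum is $\prec e^{-\pi M/8}=\eps^{1/16}$, completing the proof.

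The only delicate point is checking that the AM--GM estimate is sharp enough to retain full exponent $y_1/2$ in the $u$-direction; any weaker inequality would erode the exponent below $1/16$. The Minkowski inequality $|y_{12}|\leq y_2/2$ together with $y_2\leq y_1$ is exactly what ensures the cross term cannot dominate. Once this estimate is in hand, the separation of the sum and the elementary bound on Gaussian-type series complete the argument.
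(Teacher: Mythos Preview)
Your proof is correct and follows the same overall strategy as the paper: differentiate termwise, observe that the factor $u^2$ kills the $u=0$ term, use the Minkowski conditions on $S_\eps$ to isolate the $y_1$-contribution, and extract $e^{-\pi M/8}=\eps^{1/16}$ from the minimal value $u^2=1/4$. The difference is in how the remainder is packaged. The paper introduces an auxiliary point $\tau'\in\MH_2$ obtained by halving $y_1$ (the check that $(y_1/2)y_2-y_{12}^2>0$ uses exactly the inequalities $y_{12}\leq y_2/2$ and $y_2\leq y_1$ that you invoke), peels off $e^{-\pi u^2 y_1/2}$, and identifies what is left as the absolutely convergent series for $\partial_{x_1}\vartheta_{a,b}(\tau')$. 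Your route is more elementary: the AM--GM bound $Q(u,v)\geq u^2 y_1/2 + v^2 y_2/2$ makes the double sum factor outright, so the uniformity in $\eps$ of the remainder is explicit rather than hidden in the statement that $\tau'\in\MH_2$. The paper's trick is conceptually slicker; your version is more self-contained and makes the uniform bound transparent.
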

\begin{proof}
On $\partial\Ab_2$, the coordinate $\tau_1$ is constant, equal to $i\infty$. Hence, the partial derivative $\frac{\partial}{\partial x_1}\vartheta_{a,b}$ vanishes on $\partial\Ab_2$. To determine its vanishing order, we note that for a matrix $\tau\in S_\eps$ the inequalities $y_{12}\leq y_2/2$ and $y_2<y_1$ hold by definition of $S_\eps$. We deduce the inequality $(y_1/2)y_2-y^2_{12}>0$. Therefore, the matrix 
$$\tau'=\begin{pmatrix}\tau_1-i\frac{y_1}{2}&\tau_{12}\\ \tau_{12}&\tau_2\end{pmatrix},
$$ obtained from $\tau$ by replacing the coordinate $y_1$ by $y_1/2 $, lies in $\MH_2$.
Letting $n=(n_1,n_2)^t\in\MZ^2$ and $a=(a_1, a_2)^t, b=(b_1, b_2)^t\in (\MZ/2\MZ)^2$, we expand $\vartheta_{a,b}$ as
\begin{align*}
&\vartheta_{a,b}(\tau)=\sum_{n\in \MZ^2} e^{2\pi i\Bigl(\frac{1}{2}\bigl(n+\frac{a}{2}\bigr)^t\tau\bigl(n+\frac{a}{2}\bigr)+\bigl(n+\frac{a}{2}\bigr)^t\frac{b}{2}\Bigr)}\\
&=\sum_{n\in \MZ^2} e^{\pi i\Bigl(\bigl(n_1+\frac{a_1}{2}\bigr)^2\tau_1+2\bigl(n_1+\frac{a_1}{2}\bigr)\bigl(n_2+\frac{a_2}{2}\bigr)\tau_{12}+\bigl(n_2+\frac{a_2}{2}\bigr)^2\tau_2+\bigl(n_1+\frac{a_1}{2}\bigr)b_1+\bigl(n_2+\frac{a_2}{2}\bigr)b_2\Bigr)}\\
&=\sum_{n\in \MZ^2} e^{-\pi\bigl(n_1+\frac{a_1}{2}\bigr)^2 \frac{y_1}{2}} e^{2\pi i\Bigl(\frac{1}{2}\bigl(n+\frac{a}{2}\bigr)^t\tau'\bigl(n+\frac{a}{2}\bigr)+\bigl(n+\frac{a}{2}\bigr)^t\frac{b}{2}\Bigr)}.
\end{align*}
For $n_1\neq0$ or $a_1\neq 0$, we deduce with $y_1=-\log \eps/2\pi$ that
\begin{align*}
e^{-\pi\bigl(n_1+\frac{a_1}{2}\bigr)^2 \frac{y_1}{2}}\leq e^{\frac{1}{16}\log\eps}=\eps^{\frac{1}{16}}.
\end{align*}
For the partial derivative $\frac{\partial}{\partial x_1}\vartheta_{a,b}$, we now obtain
\begin{align*}
\left\vert\frac{\partial}{\partial x_1}\vartheta_{a,b}(\tau)\right\vert
&=\left\vert\sum_{n\in \MZ^2} e^{-\pi\left(n_1+\frac{a_1}{2}\right)^2 \frac{y_1}{2}} \pi i\left(n_1+\frac{a_1}{2}\right)^2e^{2\pi i\Bigl(\frac{1}{2}\bigl(n+\frac{a}{2}\bigr)^t\tau'\bigl(n+\frac{a}{2}\bigr)+\bigl(n+\frac{a}{2}\bigr)^t\frac{b}{2}\Bigr)}\right\vert\\
&\leq \eps^{\frac{1}{16}}\sum_{n\in \MZ^2} \left\vert\pi i\left(n_1+\frac{a_1}{2}\right)^2e^{2\pi i\Bigl(\frac{1}{2}\bigl(n+\frac{a}{2}\bigr)^t\tau'\bigl(n+\frac{a}{2}\bigr)+\bigl(n+\frac{a}{2}\bigr)^t\frac{b}{2}\Bigr)}\right\vert,
\end{align*}
and the latter sum converges, as the sum
\begin{align*}
\sum_{n\in \MZ^2} \pi i\left(n_1+\frac{a_1}{2}\right)^2e^{2\pi i\Bigl(\frac{1}{2}\bigl(n+\frac{a}{2}\bigr)^t\tau'\bigl(n+\frac{a}{2}\bigr)+\bigl(n+\frac{a}{2}\bigr)^t\frac{b}{2}\Bigr)}=\frac{\partial}{\partial x_1}\vartheta_{a,b}(\tau')
\end{align*}
converges absolutely for $\tau'\in\MH_2$. This proves the claim.
\end{proof}
We will now prove the vanishing of the integral along $\partial B_\eps(\partial \Ab_2)$ by suitably subdividing the integration domain.

\begin{thm}\label{thm_fullboundary} The integral
$$
\frac{1}{(4\pi i)^2}\int\limits_{\partial B_\eps(\partial \Ab_2)} \left[(g_{6}*g_{4}*g_{12})\wedge \dd^c g_{10}-g_{10}\wedge \dd^c (g_{6}*g_{4}*g_{12})\right]
$$
converges absolutely, and its value tends to 0 for $\eps$ approaching 0.
\end{thm}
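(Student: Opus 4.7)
The plan is to prove the theorem by direct estimation on the fundamental region $S_\eps$ of Lemma~\ref{lem_fundamentalbound}. First I substitute Lemmas~\ref{lem_integrand1} and \ref{lem_integrand2} to rewrite the integrand as a sum of four wedge-product terms in $g_4, g_6, g_{10}, \sigma_{4,6}, \sigma_{6,4}$ and the operators $\ddc, \dd\ddc$. On $\partial B_\eps(\partial\Ab_2)$ the coordinate $y_1 = M := -\log\eps/(2\pi)$ is constant, so the domain is $5$-real-dimensional with coordinates $(x_1, x_2, y_2, x_{12}, y_{12})$, and any non-trivial $5$-form on it must carry a $\dd x_1$ factor.

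Next I would catalogue the sources of $\dd x_1$ together with their orders of magnitude on $\partial B_\eps$. By Lemma~\ref{lem_newpartition}, $\sigma_{4,6}$ and $\sigma_{6,4}$ depend only on $\tau_2$ and introduce no $\dd x_1$, so the only sources are: (a)~the main $-\tfrac{1}{2}\,\dd x_1$ in $\ddc g_{10}$ arising from $\ddc(2\pi y_1)$, since $-\log|\chi_{10}|\sim 2\pi y_1$ (because $\chi_{10}$ has a simple zero along $\partial\Ab_2$), of order $O(1)$; (b)~the $\dd x_1$-components of $\ddc g_4$ and $\ddc g_6$ coming from the Petersson factor $-(k/2)\log\det y$, of order $O(1/M)$; (c)~the $\dd x_1$-components of $\dd\ddc g_4$ and $\dd\ddc g_{12}$ coming from the cross-derivatives $\partial^2\log\det y/(\partial y_1\,\partial y_j)$ with $j\neq 1$, of order $O(1/M^2)$; and (d)~the $\partial/\partial x_1$-derivatives of $E_4, E_6, \chi_{10}, \chi_{12}$, of order $O(\eps^{1/16})$ by Lemma~\ref{lem_thetaeps}.

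The key additional input is a direct Hessian computation: away from divisors one has $\dd\ddc g_k = -(k/2)\,\dd\ddc\log\det y$, and the $4$-form $(\dd\ddc \log\det y)^2$ restricted to the slice $(x_2, y_2, x_{12}, y_{12})$ carries a coefficient of size $O\bigl(y_1^2/(\det y)^3\bigr) = O\bigl(1/(M y_2^3)\bigr)$ on $\partial B_\eps$. For instance, in the term $(\sigma_{4,6}\,g_6) \wedge \dd\ddc g_4 \wedge \dd\ddc g_{12} \wedge \ddc g_{10}$, the dominant sub-case has $\dd x_1$ supplied by $\ddc g_{10}$ (contribution $O(1)$) paired with $\dd\ddc g_4 \wedge \dd\ddc g_{12}$ in the slice (contribution $O(1/M)$ by the Hessian bound), and combined with $\sigma_{4,6}g_6 = O(\log M)$ this yields $O((\log M)/M)$. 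All other sub-cases and all four terms are treated analogously, each bounded by $(\log M)^c/M$ or $M^c \eps^{1/16}$ for a small constant $c$, all tending to zero as $\eps\to 0$. Absolute integrability at fixed $\eps > 0$ follows from integrability of $\log\|E_k(\tau_2)\|_{\Pet}$ on $\overline{\Ab}_1$ (as used in the proof of Proposition~\ref{prop_boundaryH}) together with the bounded ranges of the remaining coordinates in $S_\eps$. The main obstacle is the detailed bookkeeping of the four terms and their sub-cases; the decisive analytic inputs are Lemma~\ref{lem_thetaeps} and the above Hessian computation.
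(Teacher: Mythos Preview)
Your approach is essentially the paper's own: both estimate the integrand in local coordinates on $S_\eps$ using Lemmas~\ref{lem_integrand1}--\ref{lem_thetaeps} and arrive at a bound of the shape $(\log M)^c/M$. The paper organizes the computation by splitting $\partial B_\eps(\partial\Ab_2)$ into $U=N_4\cup N_6\cup\{y_2>2\}$ (where the partition of unity becomes $0$, $1$, or $1/2$ and the integrand simplifies to an explicit scalar $f(\tau)$) and its bounded complement; you instead catalogue which factor supplies the $\dd x_1$. These are equivalent bookkeepings.

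There is one genuine omission. You account for the simple zero of $\chi_{10}$ along $\partial\Ab_2$, but not for its zero along the Humbert surface $\overline{\mathcal H}=\{\tau_{12}=0\}$, which meets $\partial B_\eps(\partial\Ab_2)$. Thus $g_{10}$ carries an extra $-2\log|\tau_{12}|$ and $\ddc g_{10}$ an extra $1/|\tau_{12}|$-type term in the $(x_{12},y_{12})$-directions; these enter in the sub-cases where $\ddc g_{10}$ contributes a slice $1$-form rather than the $\dd x_1$, and in the two terms carrying the factor $g_{10}$. The paper handles this by writing $\chi_{10}=r\,\tau_{12}^2\,\phi_{10}$ with $\phi_{10}$ non-vanishing near $\partial\Ab_2$ and observing that the combinations
\[
\eps\,|\chi_{10}|^{-1}\partial_r|\chi_{10}|,\qquad |\chi_{10}|^{-1}\partial_{y_2}|\chi_{10}|,\qquad y_{12}\,|\chi_{10}|^{-1}\partial_{y_{12}}|\chi_{10}|
\]
remain bounded. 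You should insert this step; without it your estimates for the $\ddc g_{10}$-slice contributions and for the $g_{10}$-factor are incomplete.

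One smaller point: your Hessian bound is $O\bigl(1/(My_2^3)\bigr)$, not $O(1/M)$ as you write in the sample term. The $y_2$-dependence matters, since $y_2$ ranges over $[1/2,M]$; it is precisely the integrations $\int_{1/2}^M y_2^{-2}\,\dd y_2=O(1)$ and $\int_{1/2}^M y_2^{-1}\,\dd y_2=O(\log M)$ (after the $y_{12}$-integration contributes a factor $y_2$) that produce the final $\log M/M$ rather than something that fails to decay.
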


\begin{proof}
 Let $\{\sigma_{4,6}, \sigma_{6,4}\}$ be a partition of unity adapted to the divisors $\divi(E_4)$ and $\divi(E_6)$. Let $N_4\subseteq\partial B_\eps (\partial \Ab_2)$ be a neighbourhood of $\vert\divi(E_4)\vert\cap \partial B_\eps (\partial \Ab_2)$ such that $\sigma_{4,6}=1$ on $N_4$, and let $N_6\subseteq\partial B_\eps (\partial \Ab_2)$ be a neighbourhood of $\vert\divi(E_6)\vert\cap \partial B_\eps (\partial \Ab_2)$ such that $\sigma_{6,4}=1$ on $N_6$.
We define an open subset $U$ of $\partial B_\eps(\partial \Ab_2)$ by setting
\begin{align*}
U:=N_4\cup N_6\cup (\partial B_\eps(\partial \Ab_2)\cap\{\tau\in\mathcal{F}\,\vert\, y_2>2\})\subseteq\partial B_\eps(\partial \Ab_2).
\end{align*}
 We note that this is a disjoint union. We first show that for $U\subseteq\partial B_\eps(\partial \Ab_2)$ as before, the integral
\begin{align*}
\frac{1}{(4\pi i)^2}\int\limits_{U} \left[(g_{6}*g_{4}*g_{12})\wedge \dd^c g_{10}-g_{10}\wedge \dd^c (g_{6}*g_{4}*g_{12})\right]
\end{align*}
converges absolutely, and its value tends to 0 for $\eps$ approaching 0. Applying Lemmas~\ref{lem_integrand1} and \ref{lem_integrand2}, we see that the form 
\begin{align*}
\frac{1}{(4\pi i)^2}\bigl((g_{6}*g_{4}*g_{12})\wedge \dd^c g_{10}-g_{10}\wedge \dd^c(g_{6}*g_{4}*g_{12})\bigr)
\end{align*}
restricts on $N_4$ to
\begin{align}\label{eq_n4integrand}
g_{6}\wedge\dd\ddc g_4\wedge\dd\ddc g_{12}\wedge\dd^c g_{10}-g_{10}\wedge \ddc g_{6}\wedge\dd\ddc g_4\wedge\dd\ddc g_{12},
\end{align}
on $N_6$ to
\begin{align}\label{eq_n6integrand}
\dd\ddc g_6\wedge g_4\wedge \dd\ddc g_{12}\wedge \ddc g_{10}-g_{10}\wedge\dd\ddc g_6\wedge \ddc g_4\wedge \dd\ddc g_{12},
\end{align}
and on $\partial B_\eps(\partial \Ab_2)\cap\{\tau\in\mathcal{F}\,\vert\, y_2>2\}$ to
\begin{align}\label{eq_nintegrand}
\frac{1}{2}&\bigl(g_{6}\wedge \dd\dd^c g_{4}\wedge\dd\dd^c g_{12}\wedge \dd^c g_{10}-g_{10}\wedge \dd^c g_{6}\wedge \dd\dd^c g_{4}\wedge\dd\dd^c g_{12}\bigr.\notag\\
&\bigl.+ \dd\dd^c g_{6}\wedge g_{4}\wedge\dd\dd^c g_{12}\wedge \dd^c g_{10}-g_{10}\wedge \dd\dd^c g_{6}\wedge \dd^c g_{4}\wedge\dd\dd^c g_{12}\bigr),
\end{align}
as here $\sigma_{4,6}=\sigma_{6,4}=1/2$ holds.
With the explicit formulas for $g_4, g_6, g_{10}, g_{12}$ (to be inserted in Section 2), we can explicitely express the integrand on $U$ in the form
\begin{align*}
& f(\tau)\dd\theta\dd x_2\dd y_2\dd x_{12}\dd y_{12},
\end{align*}
where $f(\tau)$ is a smooth function depending on $\eps$. 
We will first bound $ f(\tau)$ on $N_4$, and then deduce that the same bound holds for the whole $U$.
With $M=-\log \eps/2\pi$, one finds
\begin{align}\label{eq_fexplicit}
f(\tau)=&\frac{3}{16\pi^3}\left(-\log\vert E_6\vert-6\log(4\pi)-3\log\left(My_2-y_{12}^2\right)\right)\times\notag\\
&\Biggl(\frac{4M^2}{\left(My_2-y_{12}^2\right)^3}\biggl(-\eps\frac{1}{\vert \chi_{10}\vert}\frac{\partial}{\partial r}\vert \chi_{10}\vert +\frac{5}{2\pi}\frac{y_2}{My_2-y_{12}^2}\biggr)\notag\\
&\phantom{a}+\frac{4 y^2_{12}}{\left(My_2-y_{12}^2\right)^3}\frac{1}{2\pi}\biggl(\frac{1}{\vert \chi_{10}\vert }\frac{\partial}{\partial y_2}\vert \chi_{10}\vert +5\frac{M}{My_2-y_{12}^2} \biggr)\notag\\
&\phantom{a}+\frac{4My_{12}}{\left(My_2-y_{12}^2\right)^3}\frac{1}{2\pi}\biggl(\frac{1}{\vert \chi_{10}\vert }\frac{\partial}{\partial y_{12}}\vert \chi_{10}\vert -10\frac{y_{12}}{My_2-y_{12}^2}\biggr)
\Biggr)\notag\\
&-\frac{3}{16\pi^3}\biggl(-\log\vert \chi_{10}\vert-10\log(4\pi)-5\log\left(My_2-y_{12}^2\right)\biggr)\times\notag\\
&\Biggl(\frac{4M^2}{\left(My_2-y_{12}^2\right)^3}\biggl(-\eps\frac{1}{\vert E_6\vert}\frac{\partial}{\partial r}\vert E_6\vert +\frac{3}{2\pi}\frac{y_{2}}{My_2-y_{12}^2}\biggr)\notag\\
&\phantom{a}+\frac{4 y^2_{12}}{\left(My_2-y_{12}^2\right)^3}\frac{1}{2\pi}\biggl(\frac{1}{\vert E_6\vert }\frac{\partial}{\partial y_2}\vert E_6\vert +3\frac{M}{My_2-y_{12}^2} \biggr)\notag\\
&\phantom{a}+\frac{4My_{12}}{\left(My_2-y_{12}^2\right)^3}\frac{1}{2\pi}\biggl(\frac{1}{\vert E_6\vert }\frac{\partial}{\partial y_{12}}\vert E_6\vert -6\frac{y_{12}}{My_2-y_{12}^2} \biggr)
\Biggr).
\end{align}
The modular form $\chi_{10}$ has a simple zero along $\partial\Ab_2$, and a simple zero along $\mathcal{H}$, given by the equality $\tau^2_{12}=0$ for the local coordinate $\tau^2_{12}$ around $\mathcal{H}$. Therefore, it decomposes as $\chi_{10}=r\tau^2_{12}\phi_{10}(\tau)$, with $\phi_{10}(\tau)$ a non-zero smooth function in a neighbourhood of  $\partial\mathcal A_2$. Hence, the terms
$$
\eps\frac{1}{\vert \chi_{10}\vert}\frac{\partial}{\partial r}\vert \chi_{10}\vert,\quad \frac{1}{\vert \chi_{10}\vert }\frac{\partial}{\partial y_2}\vert \chi_{10}\vert, \quad \frac{y_{12}}{\vert \chi_{10}\vert}\frac{\partial}{\partial y_{12}}\vert \chi_{10}\vert
$$
are bounded from above on $\partial B_{\eps}(\partial\mathcal A_2)$, and the bound is independent of $\eps$. Note that, as we are outside $N_6$, the term $\log\vert E_6\vert$ and its partial derivatives occuring in (\ref{eq_fexplicit}) are bounded independently of $\eps$ as well. 
Applying these considerations and noting that the term $\log\vert\chi_{10}\vert$ is of order $M$, we can bound the absolute value of $f(\tau)$ on $N_4$ by
\begin{align*}
\vert f(\tau)\vert\prec\frac{\log\left(My_2-y_{12}^2\right)}{\left(My_2-y_{12}^2\right)^3}\cdot&\Biggl(M^2\biggl(1+\frac{y_2}{My_2-y_{12}^2}\biggr)\\
&+ y^2_{12}\biggl(1+\frac{M}{My_2-y_{12}^2}\biggr)+M\biggl(1+\frac{y^2_{12}}{My_2-y_{12}^2}\biggr)
\Biggr)\\
+\frac{M}{\left(My_2-y_{12}^2\right)^3}\cdot&\Biggl(M^2\biggl( \eps+\frac{y_2}{My_2-y_{12}^2}\biggr)\\
&+ y^2_{12}\biggl(1+\frac{M}{My_2-y_{12}^2}\biggr)+My_{12}\biggl(1+\frac{y_{12}}{My_2-y_{12}^2} \biggr)
\Biggr).
\end{align*}
 With Lemma~\ref{lem_fundamentalbound}, one obtains the estimates
\[
\frac{1}{2}\leq y_2\leq M, \quad 0\leq y_{12}\leq y_2, \quad \frac{1}{My_2}\leq\frac{1}{My_2-y^2_{12}}\leq\frac{2}{My_2}.
\]
Moreover, $\eps\leq 1/M$ for small $\eps>0$. Hence, we can further bound $\vert f(\tau)\vert$ as
\begin{align}\label{eq_bound}
\vert f(\tau)\vert&\prec\frac{\log M}{(My_2)^3}\cdot\Biggl(M^2\biggl(1+\frac{y_2}{My_2}\biggr)+ y^2_{2}\biggl(1+\frac{M}{My_2}\biggr)+M\biggl(1+\frac{y^2_{2}}{My_2}\biggr)
\Biggr) \notag\\
&\phantom{a}+\frac{M}{\left(My_2\right)^3}\cdot\Biggl(M^2\biggl( \eps+\frac{y_2}{My_2}\biggr)+ y^2_{2}\biggl(1+\frac{M}{My_2}\biggr)+My_{2}\biggl(1+\frac{y_{2}}{My_2} \biggr)
\Biggr) \notag\\
&\prec\log M\Bigl(\frac{1}{M y^3_2}+\frac{1}{M^3 y_2}+\frac{1}{M^2 y^3_2}\Bigr) +\frac{1}{M y^3_2}+\frac{1}{M^2 y_2}+\frac{1}{M y^2_2}\notag \\
&\prec \frac{\log M}{M y^3_2}+\frac{1}{My^2_2}.
\end{align}
By analogous computations, one easily sees that the same bound for $\vert f(\tau)\vert$ holds on the whole of $U$.
As, again by Lemma~\ref{lem_fundamentalbound}, the domain of integration is contained in the set $S_{\eps}$ given by the restrictions
$$
y_1=M,\; y_2\in[1/2, M],\; y_{12}\in [0,y_2/2], \;\theta\in [0,2\pi), \; x_2, x_{12}\in[-1/2, 1/2],
$$
the value of the integral of $\left[(g_{6}*g_{4}*g_{12})\wedge \dd^c g_{10}-g_{10}\wedge \dd^c (g_{6}*g_{4}*g_{12})\right]$ over the open set $U$ can be bounded from above by the integral of the estimate for $\vert f(\tau)\vert\dd\theta\dd x_2\dd y_2\dd x_{12}\dd y_{12}$ given in (\ref{eq_bound}) over the set $S_{\eps}$. One obtains
\begin{align*}
&\left\vert\frac{1}{(4\pi i)^2}\int\limits_{U}\left[(g_{6}*g_{4}*g_{12})\wedge \dd^c g_{10}-g_{10}\wedge \dd^c (g_{6}*g_{4}*g_{12})\right]\right\vert\\
&\phantom{aaa}\prec\int\limits_{S_\eps}\left(\frac{\log M}{M y^3_2}+\frac{1}{My^2_2}\right)\dd\theta\dd x_2\dd y_2\dd x_{12}\dd y_{12}\\
&\phantom{aaa}\prec \int\limits_{1/2}^M\left(\frac{\log M}{M y^
2_2}+\frac{1}{My_2}\right)\dd y_2\prec\frac{\log M}{M},
\end{align*}

as integrating over $\theta$ and $x_j$ $(j=1,12,2)$ gives a factor $2\pi$, and integration over $y_{12}$ multiplies the integrand by $y_2/2$. As $\log M/M$ tends to 0 for $\eps$ approaching 0, the claim follows.

We will now show that for $U\subseteq\partial B_\eps(\partial \Ab_2)$ as before, the integral
$$
\frac{1}{(4\pi i)^2}\int\limits_{\partial B_\eps(\partial\Ab_2)\setminus U} \left[(g_{6}*g_{4}*g_{12})\wedge \dd^c g_{10}-g_{10}\wedge \dd^c (g_{6}*g_{4}*g_{12})\right]
$$
converges absolutely, and its value tends to 0 for $\eps$ approaching 0. By Lemmas~\ref{lem_integrand1} and \ref{lem_integrand2}, the integrand has the form
\begin{align}\label{eq_fullintegrand}
&(\sigma_{4,6}g_{6})\wedge\dd\ddc g_4\wedge\dd\ddc g_{12}\wedge\dd^c g_{10}+\dd\ddc (\sigma_{6,4}g_6)\wedge g_4\wedge \dd\ddc g_{12}\wedge \ddc g_{10}\notag\\
&-g_{10}\wedge \ddc(\sigma_{4,6}g_{6})\wedge\dd\ddc g_4\wedge\dd\ddc g_{12}-g_{10}\wedge\dd\ddc (\sigma_{6,4}g_6)\wedge \ddc g_4\wedge \dd\ddc g_{12}
\end{align}
on $\dell B_{\eps}(\dell \mathcal{A}_2)\setminus U$, with $\{\sigma_{4,6}, \sigma_{6,4}\}$ a partition of unity adapted to $\divi (E_4)$ and $ \divi (E_6)$. In the following, we will give bounds for the forms occuring in (\ref{eq_fullintegrand}). For forms $\alpha, \beta$ on $\partial B_\eps(\partial \Ab_2)$, we will use the notation $\alpha\prec \beta$ if there exists a positive real constant $C$ such that $\int_V\alpha\leq C\int_V\beta$ for all closed subsets $V\subseteq\partial B_\eps(\partial \Ab_2)$ whenever the integrals are defined. We compute 
\begin{align*}
\dd g_6
=&-\frac{1}{\vert E_6\vert}\frac{\partial}{\partial \theta}\vert E_6\vert \dd \theta-\frac{1}{\vert E_6\vert }\frac{\partial}{\partial x_2}\vert E_6\vert\dd x_2-\frac{1}{\vert E_6\vert }\frac{\partial}{\partial x_{12}}\vert E_6\vert\dd x_{12}\\
&-\Biggl(\frac{1}{\vert E_6\vert }\frac{\partial}{\partial y_2}\vert E_6\vert +3\frac{M}{My_2-y_{12}^2}\Biggr)\dd y_2\\
&-\Biggl(\frac{1}{\vert E_6\vert }\frac{\partial}{\partial y_{12}}\vert E_6\vert -3\frac{2y_{12}}{My_2-y_{12}^2}\Biggr)\dd y_{12}.
\end{align*}
We can apply Lemma~\ref{lem_thetaeps} and obtain the bound
$$
\left\vert\frac{\partial}{\partial x_1}E_6\right\vert=\frac{1}{2\pi}\left\vert\frac{\partial}{\partial \theta}E_6\right\vert\prec\eps^{\frac{1}{16}}\prec \frac{1}{M}
$$
on $S_\eps$ for $\eps$ small. 

Applying the conditions $1/2<y_2<2$ and $y_{12}\leq y_2$ to the coordinate expansions of $\dd^c g_k$ and $\dd\dd^c g_k$ $(k=4,6,12)$, and noting that $\sigma_{6,4}$ only depends on the local coordinate $\tau_2$, we obtain the estimates
\begin{align*}
\dd \sigma_{6,4}, \dd^c \sigma_{6,4}&\prec \dd x_2+\dd y_2,\\[0.15cm]
\dd\dd^c \sigma_{6,4} &\prec \dd x_2\dd y_2,\\
\dd^c g_6, \dd g_6, \dd^c g_4 &\prec \frac{1}{M} \dd\theta+\dd x_2+\dd y_2+\dd x_{12} +\dd y_{12},\\
\dd^c g_{10} &\prec \dd\theta+\dd x_2+\dd y_2+\dd x_{12} +\dd y_{12}
\end{align*}
and
\begin{align*}
\dd\dd^c g_{4}, \dd\dd^c g_6,\dd\dd^c g_{12}
\prec \frac{1}{M^2}(\dd\theta\dd y_{2}+\dd\theta \dd y_{12})+\frac{1}{M}(\dd x_2\dd y_{12}+\dd x_{12}\dd y_2+\dd x_{12}\dd y_{12})+\dd x_2\dd y_2
\end{align*}
on $\partial B_\eps(\partial\Ab_2)\setminus U$. 

Using these estimates to bound the summands in (\ref{eq_fullintegrand}) on $\partial B_\eps(\partial\Ab_2)\setminus U$, one obtains the bound $\frac{\log(M)^2}{M}$ for the integrand and the theorem follows.
\end{proof}
 
\section{An explicit formula for the arithmetic volume}\label{sect:6}
\begin{prop}
The contribution $\widehat{\vol}(\overline\Ab_2)_{\mathrm{fin}}$ for the arithmetic self intersection number coming from the finite fibres is a rational linear combination of $\log 2 $ and $\log 3$.
\end{prop}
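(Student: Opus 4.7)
The plan is to analyse the finite contribution prime-by-prime using Serre's Tor formula (as realised in the framework of Burgos--Kramer--K\"uhn), and to show that only the primes $2$ and $3$ carry nonzero local intersection multiplicities. Write
\[
\widehat{\vol}(\overline{\Ab}_2)_{\mathrm{fin}}=\sum_{p\text{ prime}} i_p\bigl(\divi(E_4),\divi(E_6),\divi(\chi_{10}),\divi(\chi_{12})\bigr)\log p,
\]
where $i_p$ is the local intersection multiplicity on the special fibre $\overline{\Ab}_{2,\mathbbm{F}_p}$. Since all four modular forms have integer, coprime Fourier coefficients, they define sections of the Hodge bundle over $\mathrm{Spec}(\MZ)$ by the $q$-expansion principle (as already invoked for the complex fibre), so the divisors extend to Cartier divisors on the Faltings--Chai model. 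Each $i_p$ is the (finite) sum of lengths of local Tor modules and is therefore a rational number, so each summand is rational multiple of $\log p$.

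Next I would identify the primes at which $i_p\ne 0$. Over the generic fibre Lemma~\ref{lem_properintersection} shows that the four divisors intersect successively properly and the total intersection is empty, reducing stepwise through the zero-dimensional cycle $\tfrac{1}{6}(\{i\}\times_{\mathcal H}\{\omega\})$ before being killed by $\divi(\chi_{12})$. The analogous inductive intersection in characteristic $p$ behaves identically provided none of the intermediate cycles acquires excess components mod $p$, and provided the CM point $\mathrm{pt}=E_i\times E_\omega\in\Ab_2(\overline{\MZ})$ does not meet the reduction of $\divi(\chi_{12})$. Away from $p=2,3$, the elliptic curves $E_i$ (with $j=1728=2^6\cdot 3^3$) and $E_\omega$ (with $j=0$) have ordinary good reduction and lie in the interior $\Ab_2\setminus\partial\Ab_2=\Ab_2\setminus\divi(\chi_{12})$; the relations (\ref{eq_E4Delta}) and the integral $q$-expansions of $E_4,E_6,\chi_{10},\chi_{12}$ prevent the creation of new components in $\divi(E_4)\cdot\divi(E_6)\cdot\divi(\chi_{10})$ at such primes, so the reduction of the zero-cycle $\tfrac{1}{6}(\{i\}\times_{\mathcal H}\{\omega\})$ remains disjoint from $\divi(\chi_{12})_{\mathbbm{F}_p}$. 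Hence $i_p=0$ for every prime $p\ne 2,3$.

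It follows that
\[
\widehat{\vol}(\overline{\Ab}_2)_{\mathrm{fin}}=c_2\log 2+c_3\log 3
\]
with $c_2,c_3\in\MQ$, which is the claim. I expect the main obstacle to be the case analysis at $p=2$ and $p=3$: there $E_4$ and $E_6$ reduce in degenerate ways (for instance, $E_6\equiv E_4^{3/2}$-type congruences modulo small primes, and the supersingular locus meets the CM points $j=0, 1728$), so the local intersection numbers $c_2$ and $c_3$ themselves are genuinely nonzero and require a careful Tor-length computation on the Faltings--Chai model; however, controlling only the support of the intersection (enough to prove the qualitative statement of the proposition) needs nothing more than that the CM points $E_i\times E_\omega$ have good reduction outside $\{2,3\}$ and that $\chi_{12}$ does not identically vanish in characteristic $p>3$.
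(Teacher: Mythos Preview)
Your approach is genuinely different from the paper's. The paper proceeds by an explicit $\vartheta$-embedding: it realises (a level cover of) $\Ab_2$ inside $\mathbbm{P}^4$ as an integral quartic hypersurface (over $\MZ[\tfrac{1}{2}]$), expresses $E_4, E_6, \chi_{10}^2, \chi_{12}$ as explicit polynomials in the projective coordinates, and then solves the resulting system fibre by fibre. For $p\ne 2,3$ the system has no solution (a factor $6$ appears at the critical step); at $p=3$ one finds a single $\Sp_4(\MZ/2\MZ)$-orbit of solutions; and $p=2$ is outside the range of the model. You instead try to argue arithmetically, by specialising the generic-fibre zero-cycle $\tfrac{1}{6}(\{i\}\times_{\mathcal H}\{\omega\})$ and noting that it has good reduction outside $\{2,3\}$ and lands in the interior, away from $\divi(\chi_{12})$.

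The gap is the sentence ``the relations~(\ref{eq_E4Delta}) and the integral $q$-expansions \ldots\ prevent the creation of new components''. Specialising the generic-fibre cycle only controls the horizontal part of the arithmetic intersection; it does not by itself rule out purely vertical components of $\divi(E_4)\cap\divi(E_6)\cap\divi(\chi_{10})\cap\divi(\chi_{12})$ over $\mathbbm{F}_p$. To close this you must verify directly that the common zero locus is empty in characteristic $p\ne 2,3$, and that requires two things you have not done: (i) checking that $\divi(\chi_{10})=\partial\Ab_2+\overline{\mathcal H}$ persists over $\MZ[\tfrac{1}{6}]$, and (ii) a case split on these two components using the restrictions~(\ref{eq_Hreduction}),~(\ref{eq_Dreduction}) together with the \emph{one-variable} identity~(\ref{eq_E4Delta}). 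On $\partial\Ab_2$ one gets $E_4(\tau_2)=E_6(\tau_2)=0\Rightarrow 12^3\Delta(\tau_2)=0$, impossible away from the cusp for $p\ne 2,3$; on $\overline{\mathcal H}$ one recovers the point $\{i\}\times_{\mathcal H}\{\omega\}$ and then $\chi_{12}|_{\mathcal H}=12\,\Delta(\tau_1)\Delta(\tau_2)\neq 0$ there. Once this is written out your argument works, but you are then essentially doing the paper's direct verification in different coordinates. Two minor corrections: the reductions of $E_i$ and $E_\omega$ are supersingular (not ordinary) at infinitely many primes $p\ne 2,3$, though this is irrelevant to the argument; and your closing paragraph about the difficulty of computing $c_2,c_3$ is beside the point, since the proposition only asserts the support is $\{2,3\}$.
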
 
\begin{proof}The statement can be verified by considering $\vartheta$-embeddings for $\Ab_2(2)$ into projective space, see, e.g., \cite{igusaMFZ}, for the defining equations for its image in $\mathbbm{P}^{4}$. These are only defined over $\MZ[\frac{1}{2}]$.
Setting
\begin{align*}
&t=-\vartheta^2_{0110}&x_{11}=\vartheta^2_{1001}\\
&x_{12}=-i\vartheta^2_{1000}&x_{13}=\vartheta^2_{0100}\\
&x_{21}=i\vartheta^2_{0001}&x_{22}=\vartheta^2_{0000}\\
&x_{23}=i\vartheta^2_{1100}&x_{31}=\vartheta^2_{0010}\\
&x_{32}=-i\vartheta^2_{0011}&x_{33}=\vartheta_{1111},
\end{align*}

the map $\tau \mapsto (t^2(\tau):x^2_{11}(\tau):\ldots:x^2_{33}(\tau))$ gives an embedding of $\mathcal{A}_2$ into $\mathbbm{P}^9$.
The $\vartheta$-relations translate to
\begin{align*}
\sum_{k=1,2,3} x_{j_1k}x_{j_2k}-\delta_{j_1j_2}t^2, \quad \sum_{j=1,2,3} x_{jk_1}x_{jk_2}-\delta_{k_1k_2}t^2
\end{align*}
for $j_1, j_2, j_3, k_1, k_2, k_3\in\{1,2,3\}$.
With these relations, one finds dependencies of the 10 coordinates above and can replace them by 
\begin{align*}
y_0=\vartheta^4_{0110},\;
y_1=\vartheta^4_{0100},\;
y_2=\vartheta^4_{0000},\;
y_3=-\vartheta^4_{1000}-\vartheta^4_{0110},\;
y_4=-\vartheta^4_{1100}-\vartheta^4_{0110},
\end{align*}
so 
we see that $\mathrm{proj}(\mathcal{A}_2)$ is a quartic hypersurface in $\mathbbm{P}^4$, defined by the equation
\begin{align}
(y_0y_1+y_0y_2+y_1y_2-y_3y_4)^2-4y_0y_1y_2(y_0+y_1+y_2+y_3+y_4)=0.\label{eq:projCoord}
\end{align}
In these coordinates, the modular forms become
\begin{align}
&\chi^2_{10}=y_0y_1y_2(-y_2-y_4)(y_0+y_1+y_2+y_3+y_4)(-y_2-y_3)(y_0+y_3)(-y_1-y_3)\notag\\
&\phantom{aaaaaa}\cdot(y_0+y_4)(-y_1-y_4)\label{eq:projChi10}\\
&E_4=y^2_0+y^2_1+y^2_2+(-y_2-y_4)^2+(y_0+y_1+y_2+y_3+y_4)^2+(-y_2-y_3)^2\notag\\
&\phantom{aaaaaa}+(y_0+y_3)^2+(-y_1-y_3)^2+(y_0+y_4)^2+(-y_1-y_4)^2\label{eq:projE4}\\
&E_{6}=(-y_1-y_4)y_2(-y_2-y_3)+(-y_1-y_4)y_2y_0+\ldots\label{eq:projE6}\\
&\chi_{12}=y_0y_1(y_0+y_3)(-y_1-y_3)(y_0+y_4)(-y_1-y_4))+\ldots\label{eq:projChi12}
\end{align}
Note that these equations are symmetric in $y_0, y_1, y_2$ and $y_3,y_4$. One of the factors in equation (\ref{eq:projChi10}) has to vanish. Consider the case $y_0=0$.
With $y_0=0$, equation (\ref{eq:projCoord}) becomes $y_1y_2-y_2y_3=0$. Plugging this into equation (\ref{eq:projE6}), one obtains
\begin{align*}
6y^2_3y^2_4(y_1+y_2+y_3+y_4)^2.
\end{align*}
For $p\neq 2,3$ assume $y_3=0$ and, therefore, $y_1=0$. The remaining two equations (\ref{eq:projE4}) and (\ref{eq:projChi12}) then give $y_2=y_4=0$. The other cases can be treated equivalently and one sees that the modular forms have empty intersection in the finite fibres, except for $p=3$.

For $p=3$, the system of equations has the six solutions
\begin{align*}
&(0:0:1:0:1),(0:0:1:1:0),(0:1:0:0:1),\\
&(0:1:0:1:0),(1:0:0:0:1),(1:0:0:1:0).
\end{align*}
An element $M\in\mathrm{Sp}_4(\mathbbm{Z}/2\mathbbm{Z}))$ acts on the $\vartheta$-functions via
$$
\vartheta_{M\cdot m}=c(M,m)\mathrm{det}(C\tau+D)^{-1}\vartheta_m\quad (m \in (\mathbbm{Z}/2\mathbbm{Z})^4)
$$
with $c(M,m)$ an 8th root of unity. We find that under this action, all above points are equivalent.
\end{proof}

 \begin{mthm}\label{thm_main}
The arithmetic self intersection number, i.e., the arithmetic degree of the line bundle $\mathcal{M}_k(\Gamma_2)$ of modular forms of weight $k$ on $\overline{\Ab}_2$, equipped with the Petersson metric, is given as
\begin{align*}
\widehat\deg(\mathcal{M}_k(\Gamma_2), \Vert\cdot\Vert_{\mathrm{Pet}})
=k^4\left(\zeta(-3)\zeta(-1)\left(2\frac{\zeta'(-3)}{\zeta(-3)}+2\frac{\zeta'(-1)}{\zeta(-1)}+\frac{17}{6}\right)+c_2\log 2 +c_3\log 3\right), 
\end{align*}
with $c_2, c_3\in \mathbbm{Q}$.
 \end{mthm} 

\begin{proof}
The main theorem follows from computing the complex contribution by adding the terms $(A)$ and $(B)$ from Proposition~\ref{prop_5integrals} and considerations about the finite contribution. As the boundary integral along $\partial B_\eps(\partial \Ab_2)$ vanishes in the limit $\eps\rightarrow 0$ by Theorem~\ref{thm_fullboundary}, the value of $(A)$ is given by
\begin{equation*}
(A)=10\cdot 6\cdot 4\cdot 12\,\zeta(-3)\zeta(-1)\left(\frac{4}{3}+2\frac{\zeta'(-3)}{\zeta(-3)}-\frac{\zeta'(-1)}{\zeta(-1)}+\frac{6}{5}\log 2\right),
\end{equation*}
according to Proposition~\ref{prop_higherdim}. The boundary integral along $\partial B_\eps(\partial \mathcal{H})$ vanishes by Proposition~\ref{prop_boundaryH}, therefore, the value of $B$ is given by
\begin{equation*}
B=-6\biggl(\frac{1}{2}+\frac{\zeta'(-1)}{\zeta(-1)}\biggr)-\frac{4}{3}\log 2-\frac{2}{3}\log 3,
\end{equation*}
according to Proposition~\ref{prop_lowerdim}. We obtain
\begin{align*}
\widehat{\vol}(\overline\Ab_2)_\infty&=\frac{1}{10\cdot 6\cdot 4\cdot 12}((A)+(B))\\
&=\zeta(-1)\zeta(-3)\left(\frac{17}{6}+2\frac{\zeta'(-3)}{\zeta(-3)}+2\frac{\zeta'(-1)}{\zeta(-1)}\right)-\frac{56}{15}\log 2 -\frac{2}{3}\log 3,
\end{align*}
as claimed.
\end{proof}

\def\bibindent{.1mm}

\end{document}